\documentclass[a4paper]{article}

\usepackage[english]{babel}
\usepackage[utf8x]{inputenc}
\usepackage[T1]{fontenc}
\usepackage{lipsum}
\usepackage{blindtext}
\usepackage{graphicx,amssymb,amsmath,textcomp}
\usepackage{circuitikz}
\usepackage[a4paper,top=3cm,bottom=2cm,left=3.2cm,right=3.2cm,marginparwidth=1.75cm]{geometry}
\usepackage{tikz,tikz-cd}
\usetikzlibrary{matrix,arrows,positioning,calc}
\usepackage{verbatim}
\usepackage{palatino, mathpazo}
\usepackage[nottoc]{tocbibind}

\usepackage{amsmath,amsthm}
\usepackage{faktor}
\usepackage{xfrac} 
\usepackage{graphicx}
\usepackage[colorinlistoftodos]{todonotes}
\usepackage[colorlinks=true, allcolors=purple]{hyperref}
\usepackage{mathtools}

\newcommand{\spec}{{\rm Spec}}

\newcommand{\N}{{\mathbb N}}
\newcommand{\Z}{{\mathbb Z}}

\newcommand{\K}{{\mathbb K}}
\newcommand{\tx}{\tilde{x}}
\newcommand{\ty}{\tilde{y}}

\newcommand{\dR}{{d_{dR}}}

\newcommand{\bfem}[1]{\textbf{\emph{#1}}}

\title{Shifted Symplectic Fibrations and \\ Derived Thurston Theorem}

\author{M. Fırat Arıkan\footnote{farikan@metu.edu.tr; Department of Mathematics, METU, 06800, Ankara, T\"urkiye}, \,K. İlker Berktav\footnote{berktav@metu.edu.tr; Department of Mathematics, METU, 06800, Ankara, T\"urkiye}, \,Efe İzbudak\footnote{efe.izbudak@metu.edu.tr; Department of Mathematics, METU, 06800, Ankara, T\"urkiye}} 

\date{\vspace{-5ex}}

\begin{document}
    
\theoremstyle{plain}
\newtheorem{theorem}{Theorem}[section] 
\newtheorem{lemma}[theorem]{Lemma} 
\newtheorem{proposition}[theorem]{Proposition} 
\newtheorem{corollary}[theorem]{Corollary}

\theoremstyle{definition}
\newtheorem{notations}[theorem]{Notations}
\newtheorem{notation}[theorem]{Notation}
\newtheorem{remark}[theorem]{Remark}
\newtheorem{observation}[theorem]{Observation}
\newtheorem{definition}[theorem]{Definition}
\newtheorem{condition}[theorem]{Condition}
\newtheorem{construction}[theorem]{Construction}
\newtheorem{example}[theorem]{Example}
\newtheorem{claim}[theorem]{Claim}

\let\pf\proof
\let\epf\endproof
\numberwithin{equation}{section}

\maketitle

\begin{abstract}
In classical symplectic geometry, under mild conditions, Thurston proved that one can construct a compatible symplectic form on the total space of a symplectic fibration with a connected symplectic base. Here we prove a derived symplectic analog of this result. More precisely, we show that if a morphism  $\pi: X \rightarrow S$ of derived stacks has a \emph{shifted symplectic fibration structure}
and the target stack $S$ admits a shifted symplectic structure, then, under certain conditions, one can construct a shifted symplectic structure on the source stack $X$, compatible with $\pi$ in a sense similar to the classical case. In this derived context, an affine model construction for shifted symplectic fibrations is also developed. Along the way, we present numerous examples of shifted symplectic fibrations and provide applications of the derived Thurston theorem.
 \end{abstract}
\tableofcontents


\section{Introduction and summary}

Symplectic fibrations are one of the central and well-studied objects in classical symplectic topology
, providing a framework in which fibers carry symplectic structures compatible with the base. Their importance lies in enabling both local and global analysis of phase spaces, particularly in Hamiltonian mechanics and integrable systems. As emphasized in \cite{Mcduff}, symplectic fibrations allow one to study the interplay between topology and dynamics through tools such as connections and reduction. 

To formalize the notion of symplectic fibrations, classical symplectic topology employs the following setting: Let $E$ be a locally trivial fiber bundle whose fibers $F$ and base $B$ are even-dimensional. Denote the projection map by $\pi:E\to B$, and let 
$G\subset \mathsf{Diff}(F)$ be the structure group. We simply write $(F\hookrightarrow E \xrightarrow{\pi} B)_G$ for such a fiber bundle data. 
If the fiber $F$ admits a \emph{symplectic form} $\omega$ (a closed non-degenerate $2$-form on $F$) and all transition maps are symplectomorphisms of $(F,\omega)$, that is, $G\subset \mathsf{Symp}(F,\omega)\subset \mathsf{Diff}(F)$, then the above fiber bundle is called a \bfem{symplectic fibration}. 

For a symplectic fibration, each fiber $F_b=\pi^{-1}(b)$ carries a naturally induced symplectic form $\omega_b$, obtained by pulling back $\omega$ via a local trivialization around  $b\in B$. This construction is well-defined and independent of the choice of trivialization, as the transition maps lie within $\mathsf{Symp}(F,\omega)$.

As a key feature, symplectic fibrations offer a simple construction of a symplectic form on the total space, attributed to Thurston. That result essentially provides a topological criterion for determining when the total space of a fiber bundle admits a symplectic structure. Let us briefly recall the setup and some terminology before outlining Thurston's method. 

Given a symplectic fibration as above, suppose further that the total space $E$ admits a symplectic form $\Omega$. Such $\Omega$ is said to be \bfem{compatible with} the fibration  $\pi:E\to B$ if its restriction to each fiber $F_b$ is symplectic. This compatibility condition implies that, for every $b\in B$, the symplectic manifold $(F_b,\omega_b)$ admits a symplectic embedding into $(E,\Omega)$. This notion of compatibility is motivated by the result asserting that, for a locally trivial fiber bundle $(F\hookrightarrow E \xrightarrow{\pi} B)_G$ with connected base, if the total space $E$ admits a symplectic form $\Omega$ whose restriction to each fiber is symplectic, then $\pi:E\to B$ is a compatible symplectic fibration, 
see \cite[Lemma 6.2]{Mcduff}. 
\medskip 

An immediate question arises regarding the existence of the converse statement. Thanks to the aforementioned theorem of Thurston, if $\pi:E\to B$ is a symplectic fibration over a connected symplectic base and the fiberwise symplectic form $\omega$ represents a cohomology class in $H^2(E)$, then one can construct a symplectic form $\Omega$ on $E$ that is compatible with $\pi$. For further details on symplectic topology and symplectic fibrations, we refer the reader to \cite{Mcduff}.

In the derived context, on the other hand, one could also seek analogous definitions and results within the framework of \textit{derived symplectic geometry (DSG)}, which has been introduced and studied for more than two decades by   Pantev, Toën, Vaquié, and Vezzosi \cite{PTVV}.   Numerous classical results in smooth symplectic geometry have also been promoted to  PTVV's DSG. For instance, a local Darboux model has been verified for DSG in \cite{Brav}, and a Lagrangian neighborhood theorem for shifted symplectic derived schemes has been given in \cite{JS}. Also, it has been known that shifted cotangent stacks are shifted symplectic \cite{Calaque2019}. In addition to those, there are many interesting results in PTVV's setting that have no classical counterparts, such as derived symplectic structures on fiber products of derived stacks and on mapping stacks (including the Lagrangian intersection theorem and the AKSZ construction). In this regard, PTVV's approach extends classical symplectic geometry to non-generic and non-smooth situations.

\paragraph{Results of the paper.} 
In the same spirit, this paper aims to investigate symplectic fibrations and relevant literature within the context of DSG. That task includes formally describing \textit{derived symplectic fibrations} and developing the\textit{ derived Thurston theorem}, along with several of its applications.

In order to define the terms and verify the related results (analogues to those in the smooth theory) in the most general DSG setting, we utilize the tools of relative de Rham theory 
and introduce \textit{$n$-shifted symplectic fibration structures} (cf. Definition \ref{defn_shifted symp fibration}) generalizing the notion of symplectic fibrations in the smooth setting. Indeed, we first establish the following result as the derived analogue of the \textit{induced compatible symplectic fibration lemma}, also referred to in the text as \cite[Lemma 6.2] {Mcduff}. Let $\mathbb{K} $  denote an algebraically closed field of characteristic zero.  

\begin{theorem}[Induced compatible symplectic fibrations] \label{thm:2}
Let $\pi: X \rightarrow S$ be a morphism of derived Artin $\K$-stacks. If $X$ admits an $n$-shifted symplectic structure $\omega_X$ whose restriction to each geometric fiber $X_s$ defines an $n$-shifted symplectic structure on $X_s$, then $\pi$ admits an $n$-shifted symplectic fibration structure compatible with $\omega_X$ in the sense of Definition \ref{def:compatibility}.     
\end{theorem}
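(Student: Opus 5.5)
The strategy is to push $\omega_X$ forward along the canonical map from absolute to relative closed forms, and to observe that the nondegeneracy needed in Definition \ref{defn_shifted symp fibration} is pointwise on fibers, where it becomes exactly the hypothesis.

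I expect Definition \ref{defn_shifted symp fibration} to describe an $n$-shifted symplectic fibration structure on $\pi$ as a relative closed $2$-form of degree $n$, $\omega_{X/S}\in \mathcal{A}^{2,cl}(X/S,n)$. Its underlying $2$-form $\omega^0_{X/S}\in \mathcal{A}^2(X/S,n)$ should induce a quasi-isomorphism $\mathbb{T}_{X/S}\to \mathbb{L}_{X/S}[n]$. I also expect Definition \ref{def:compatibility} to require that $\omega_{X/S}$ be, up to a specified homotopy, the image of $\omega_X$ under the natural map $\mathcal{A}^{2,cl}(X,n)\to \mathcal{A}^{2,cl}(X/S,n)$.

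\textbf{Construction of $\omega_{X/S}$.} Start from the cofiber sequence $\pi^*\mathbb{L}_S\to \mathbb{L}_X\to \mathbb{L}_{X/S}$. It induces a morphism of graded mixed complexes from the absolute de Rham algebra $\mathbf{DR}(X)$ to the relative one $\mathbf{DR}(X/S)$, the latter built from $\mathrm{Sym}_{\mathcal{O}_X}(\mathbb{L}_{X/S}[-1])$ with its relative de Rham differential. Taking weight-$2$ truncations and realizations gives the map $\mathcal{A}^{2,cl}(X,n)\to\mathcal{A}^{2,cl}(X/S,n)$. Define $\omega_{X/S}$ as the image of $\omega_X$. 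Compatibility in the sense of Definition \ref{def:compatibility} then holds tautologically, with the identity homotopy. If the relative closed forms are instead defined by descent via smooth atlases, I would check functoriality of this map along smooth morphisms of atlases and glue. This is routine since all constructions are stacks in the smooth topology.

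\textbf{Nondegeneracy.} This is the main step. The underlying relative $2$-form gives $\Theta:\mathbb{T}_{X/S}\to\mathbb{L}_{X/S}[n]$, a map of perfect complexes on $X$; derived Artin stacks locally of finite presentation are implicitly assumed. A map of perfect complexes is a quasi-isomorphism if and only if its cone $C$ vanishes, and by derived Nakayama this holds if and only if $x^*C\simeq 0$ for every geometric point $x:\operatorname{Spec}\K'\to X$. Any such point factors through the geometric fiber $X_s$, where $s=\pi\circ x$. Relative cotangent complexes satisfy base change, so $\mathbb{L}_{X/S}|_{X_s}\simeq \mathbb{L}_{X_s}$, because $\mathbb{L}_{\operatorname{Spec}\K'}$ vanishes over $\K'$. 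The same holds for $\mathbb{T}_{X/S}$. Restriction of relative forms commutes with base change. Hence $x^*\Theta$ identifies with the pullback to $x$ of the map $\mathbb{T}_{X_s}\to\mathbb{L}_{X_s}[n]$ induced by $\omega_X|_{X_s}$. By hypothesis that map is an equivalence, so $x^*C\simeq 0$ for all $x$, and $\Theta$ is a quasi-isomorphism.

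\textbf{Expected obstacle.} The delicate point is this base-change compatibility. One must check that the restriction of $\omega_X$ to $X_s$, which is formed absolutely via $X_s\to X$, agrees with the base change of $\omega_{X/S}$ along $\operatorname{Spec}\K'\to S$. I would prove this by naturality of the map $\mathbf{DR}(X)\to\mathbf{DR}(X/S)$ in the commutative square formed by $X_s\to X$ and $\operatorname{Spec}\K'\to S$. One also needs $\mathbf{DR}(X_s/\K')\simeq\mathbf{DR}(X_s)$ over $\K'$.

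A second subtlety is that one might want the base-changed fiber forms to vary coherently, as "transition maps being symplectomorphisms." In the derived setting this coherence is encoded automatically by $\omega_{X/S}$ being a single global relative closed form, so no extra argument is needed beyond the construction above.
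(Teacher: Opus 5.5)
Your proposal is correct and follows essentially the same route as the paper: push $\omega_X$ through the canonical map $DR(X)\to DR(X/S)$, identify the fiberwise restriction of $\omega_{rel}^\flat:\mathbb{T}_{X/S}\to\mathbb{L}_{X/S}[n]$ with $(\iota_s^*\omega_X)^\flat$ via base change $\iota_s^*\mathbb{L}_{X/S}\simeq\mathbb{L}_{X_s}$, and conclude with the pointwise criterion for equivalences of perfect complexes. The differences are cosmetic: you get perfectness of $\mathbb{L}_{X/S}$ directly from local finite presentation of $X$ and $S$, which is cleaner than the paper's almost-perfect-plus-perfect-fibers argument, and the paper's Definition \ref{def:compatibility} only asks that $\iota_s^*\omega_X$ be homotopic to a symplectic structure on each fiber, which is the hypothesis itself, so compatibility is immediate under either reading.
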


Inspired by the classical theory outlined above, one could ask about the converse problem: Can we construct a compatible (absolute) shifted symplectic structure on the source $X$ induced by the given relative shifted symplectic structure on a morphism $\pi:X\rightarrow S$ of derived stacks with a shifted symplectic target $S$? The answer will be affirmative, leading to the derived version of Thurston's theorem. More precisely, we prove:
\vspace{1in}

\begin{theorem}[Derived Thurston Theorem] \label{thm:1}
    Let $n\in \Z$. Suppose that $X,S$ are two locally finitely presented derived $\K$-stacks and that $\pi: X \rightarrow (S,\omega')$ is a morphism of derived stacks, with $n$-shifted symplectic target $(S,\omega')$, such that $\pi$ carries an $n$-shifted symplectic fibration structure  $\omega$ in the sense of  Definition \ref{defn_shifted symp fibration}.
    Then one can construct, under certain conditions, a compatible $n$-shifted symplectic structure $\omega_X$ on $X$ (cf. Theorem \ref{thm: proof_thurston in dsg}).
\end{theorem}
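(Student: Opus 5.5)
The plan mirrors Thurston's classical argument, transported to relative de Rham theory. An $n$-shifted symplectic fibration structure on $\pi$ (Definition \ref{defn_shifted symp fibration}) is a relative closed $2$-form $\omega \in \mathcal{A}^{2,cl}(X/S;n)$ that is non-degenerate relative to $S$. Non-degeneracy here means the induced map $\mathbb{T}_{X/S}\to \mathbb{L}_{X/S}[n]$ is an equivalence.

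\textbf{Step 1: lift the relative form.} The "certain conditions" in the statement are, I expect, the derived analogue of "$[\omega]$ comes from $H^2(E)$". Concretely, I would assume that $\omega$ lies in the image of the natural map
\[
\mathcal{A}^{2,cl}(X;n)\longrightarrow \mathcal{A}^{2,cl}(X/S;n).
\]
This map is induced by $\mathbb{L}_X\to\mathbb{L}_{X/S}$ together with the weight filtration on the de Rham algebras. So I choose an absolute closed $2$-form $\tilde\omega$ on $X$ whose image is $\omega$. The lift includes the whole tower of closing data; the failure of such a lift is measured by the fibre of the map above, which is controlled by $\pi^*\mathbb{L}_S$.

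\textbf{Step 2: define the candidate.} Set
\[
\omega_X=\tilde\omega+\pi^*\omega'.
\]
This is an $n$-shifted closed $2$-form on $X$, since pullback preserves closedness. Its restriction to each fibre $X_s$ agrees with $\omega|_{X_s}$, because $\pi^*\omega'$ restricts to zero on $X_s$. So once $\omega_X$ is shown to be symplectic, compatibility in the sense of Definition \ref{def:compatibility} is automatic.

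\textbf{Step 3: non-degeneracy, the main obstacle.} I would use the fibre sequence
\[
\mathbb{T}_{X/S}\to\mathbb{T}_X\to\pi^*\mathbb{T}_S
\]
and its dual
\[
\pi^*\mathbb{L}_S\to\mathbb{L}_X\to\mathbb{L}_{X/S}.
\]
The map $\omega_X^\flat:\mathbb{T}_X\to\mathbb{L}_X[n]$ should be compatible with these sequences. On the sub-object it should induce $\omega^\flat$, an equivalence by hypothesis, and on the quotient it should induce $\pi^*\omega'^\flat$, an equivalence because $S$ is symplectic. Locally finite presentation makes the cotangent complexes perfect, so dualities behave well.

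The difficulty is that $\omega_X^\flat$ need not send $\mathbb{T}_{X/S}$ into $\pi^*\mathbb{L}_S[n]$. Viewed as a $2\times 2$ "matrix" with respect to the filtrations, it is only upper triangular after a suitable choice. Composing
\[
\mathbb{T}_{X/S}\to\mathbb{T}_X\to\mathbb{L}_X[n]\to\mathbb{L}_{X/S}[n]
\]
gives $\omega^\flat$, and $\pi^*\omega'^\flat$ appears on the graded piece $\pi^*\mathbb{T}_S\to\pi^*\mathbb{L}_S[n]$. The off-diagonal mixed terms come from $\tilde\omega$; these are the analogue of the horizontal part in Thurston's argument. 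Classically one takes $\tilde\omega+K\pi^*\omega'$ with $K\gg0$; scaling is meaningless in the derived setting.

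I would first try to show that, up to homotopy, $\omega_X^\flat$ is a map of two-step filtered objects whose associated graded pieces are $\omega^\flat$ and $\pi^*\omega'^\flat$. A map of filtered objects inducing equivalences on graded pieces is an equivalence, by the five lemma in the stable setting. If the mixed term obstructs this, I would add an extra hypothesis to the "certain conditions": that the lift $\tilde\omega$ can be chosen with vanishing component in
\[
\mathrm{Hom}\bigl(\mathbb{T}_{X/S},\,\pi^*\mathbb{L}_S[n]\bigr),
\]
a connection-type condition. Alternatively, the mixed term can be absorbed by noting that non-degeneracy can be checked at geometric points, where it reduces to linear algebra of perfect complexes over $\K$.
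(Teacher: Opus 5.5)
There is a genuine gap in Step 3, and none of your three proposed remedies closes it.

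First, $\omega_X^\flat$ is never a filtered map whose graded pieces are $\omega^\flat$ and $\pi^*\omega'^\flat$. The composite of $\mathbb{T}_{X/S}\to\mathbb{T}_X\to\mathbb{L}_X[n]$ with the projection to the \emph{quotient} $\mathbb{L}_{X/S}[n]$ is the equivalence $\omega^\flat$. So $\mathbb{T}_{X/S}$ never lands in the sub-object $\pi^*\mathbb{L}_S[n]$ (unless the fibres are trivial). Meanwhile $\pi^*\omega'^\flat$ goes from the quotient $\pi^*\mathbb{T}_S$ into the sub-object $\pi^*\mathbb{L}_S[n]$. No five-lemma argument applies.

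The correct reduction is a Schur complement. Because the vertical block is invertible, you can canonically take the horizontal complement to be the $\tilde\omega$-orthogonal complement of $\mathbb{T}_{X/S}$. This is the paper's retraction $\rho=(\omega^\flat)^{-1}\circ i^\vee[n]\circ\tilde\omega^\flat$, together with the section $\eta$ of $\mathbb{T}_X\to\pi^*\mathbb{T}_S$ satisfying $\rho\circ\eta\simeq0$. This kills the mixed terms with no extra hypothesis. But the horizontal block of $\tilde\omega^\flat+\pi^*\omega'^\flat$ is then $C+\pi^*\omega'^\flat$ with $C=\eta^\vee[n]\circ\tilde\omega^\flat\circ\eta$, not $\pi^*\omega'^\flat$, and nothing makes it invertible.

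For a concrete failure, take $n=0$ and $X=\mathbb{A}^4\to S=\mathbb{A}^2$, $(x_1,y_1,x_2,y_2)\mapsto(x_2,y_2)$. Let $\omega'=d_{dR}x_2\,d_{dR}y_2$, $\omega=d_{dR}x_1\,d_{dR}y_1$, and take the lift $\tilde\omega=d_{dR}x_1\,d_{dR}y_1-d_{dR}x_2\,d_{dR}y_2$. It maps exactly to $\omega$ in relative forms and has no mixed component. Yet $\tilde\omega+\pi^*\omega'=d_{dR}x_1\,d_{dR}y_1$ is degenerate at every point. So your ``connection-type'' hypothesis does not help, since it removes only the mixed terms, which are harmless anyway. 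Checking at geometric points does not help either: it is the same linear algebra, done pointwise, and the Schur complement can still be singular there.

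The missing idea is the one you discarded. Scaling is perfectly meaningful: $\mathcal{A}^{2,cl}(X,n)$ is the Dold--Kan space of a complex of $\K$-vector spaces. So $\tilde\omega+c\,\pi^*\omega'$ is a closed $n$-shifted $2$-form for every $c\in\K$, and it still restricts to $\omega_s$ on each fibre. What has no meaning over $\K$ is ``$c\gg0$'', and the paper replaces it by genericity.

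Write the horizontal block as $\pi^*\omega'^\flat\circ(c\cdot\mathrm{id}+M)$ with $M=(\pi^*\omega'^\flat)^{-1}\circ C$. One chooses $c$ so that $-c$ avoids the finitely many eigenvalues of $M$ at the generic points of $X$. This gives non-degeneracy on a dense Zariski-open $U\subseteq X$. When $X$ is proper it gives non-degeneracy on all of $X$, using a minimal polynomial of $M$ in the finite-dimensional algebra $\mathrm{Ext}^0_X(\pi^*\mathbb{T}_S,\pi^*\mathbb{T}_S)$. In the example above, any $c\neq1$ works.

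This is also why Theorem~\ref{thm: proof_thurston in dsg} only asserts a symplectic structure on a dense open substack in general. Your argument, had it worked, would have produced one on all of $X$ with $c=1$ and no properness assumption.
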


As applications of Theorem \ref{thm:1}, we present in this paper several constructions of absolute shifted symplectic structures on certain symplectic fibrations, including conormal stacks, moment map quotients, and (a dense Zariski-open derived substack of) the mapping stacks. More precisely, we prove:
\begin{corollary}\label{cor: app to Thm1}

\begin{enumerate}
        \item If $f: X \rightarrow S$ is a morphism of derived Artin $\K$-stacks locally of finite presentation such that $S$ admits an $(n-1)$-shifted symplectic structure, then $\pi: N^*[n] f\longrightarrow S$ carries an $(n-1)$-shifted symplectic fibration structure. Assume further that there exists an $(n-1)$-shifted closed 2-form $\Omega \in \mathcal{A}^{2,cl}(N^*[n] f, n-1)$ whose restriction to each fiber $\pi^{-1}(s)$ is homotopic to the canonical $(n-1)$-shifted symplectic structure $\omega_s$. Then there exists an induced $(n-1)$-shifted symplectic structure on the $n$-shifted conormal $\K$-stack $N^*[n] f$ which is compatible with the fibration $\pi$ (cf. Example \ref{example:conormal stack}, Proposition \ref{prop: conormal}). 
        
        The result can also be extended verbatim to the morphisms $N_{\beta}^*[n] f \longrightarrow T_{\beta}^*[n]S$ of twisted $\K$-stacks (cf. Corollary \ref{cor: conormal}).
        \item Let $(X, \omega_X)$ be a {$2$-shifted} symplectic derived Artin $\K$-stack equipped with a smooth action of $G$, and let $\mu: X \rightarrow {\mathfrak{g}^*[2]}$ be a \emph{{$2$-shifted} moment map}. Then $X/G \longrightarrow (BG, \omega_{BG})$ carries a $2$-shifted symplectic fibration structure such that, under certain conditions, there exists an induced {$2$-shifted}  symplectic structure on $X/G$ (cf. Example \ref{ex:moment map}, Proposition \ref{prop: moment quotient}).
       
        \item  Let $Y$ be an $n$-shifted symplectic derived Artin $\K$-stack and $X$ be a smooth projective Calabi-Yau $m$-fold. Suppose $f: L \rightarrow Y$ is an $n$-shifted Lagrangian morphism and $p: Y \rightarrow S$ is an $n$-shifted Lagrangian fibration such that the base $S$ admits an $(n-1)$-shifted symplectic structure. Under certain conditions, there exists an $(n-m-1)$-shifted symplectic structure on a dense Zariski-open derived substack of the mapping stack $\mathsf{Map}(X,L)$ which is compatible with the induced symplectic fibration $\Pi: \mathsf{Map}(X,L) \longrightarrow \mathsf{Map}(X,S)$ (cf. Example \ref{ex:mapping_stack} and Proposition \ref{prop: mapping_stack_thurston}).
    \end{enumerate}
\end{corollary}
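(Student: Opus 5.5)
The corollary follows by feeding three concrete fibrations into Theorem \ref{thm:1} (in its precise form, Theorem \ref{thm: proof_thurston in dsg}). The work therefore splits into three parts, and each part has the same two steps. First, we exhibit an $n$-shifted (or $(n-1)$-, $2$-shifted) symplectic fibration structure on $\pi$ in the sense of Definition \ref{defn_shifted symp fibration}, that is, a relative closed $2$-form which is nondegenerate relative to the base. Second, we check the hypotheses of Theorem \ref{thm: proof_thurston in dsg}. We expect these to be: the target is symplectic, and there is an absolute closed extension of the relative form that restricts fiberwise, up to homotopy, to the given fiber forms. Thurston's argument is then to take the closed extension $\Omega$ and add $\pi^*\omega'$, giving $\omega_X=\Omega+\pi^*\omega'$. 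Nondegeneracy is checked on the fiber sequence $\mathbb{T}_{X/S}\to\mathbb{T}_X\to\pi^*\mathbb{T}_S$. The map induced by $\omega_X$ is upper triangular with respect to this filtration, with diagonal blocks $\omega|_{\mathbb{T}_{X/S}}$ and $\pi^*\omega'$, so it is a quasi-isomorphism by the five lemma.

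For (1), the relative structure comes from the identification $N^*[n]f\simeq T^*[n-1](X/S)$ relative over $S$. More precisely, $\pi: N^*[n]f\to S$ is the relative shifted cotangent stack, and the relative Liouville form $\lambda$ in relative de Rham theory gives a relative $(n-1)$-shifted closed form $d\lambda$. Its relative nondegeneracy follows as in \cite{Calaque2019}, working over the base $S$. Its restriction to each geometric fiber $\pi^{-1}(s)=T^*[n-1]X_s$ is the canonical form $\omega_s$. The existence of $\Omega$ is an explicit hypothesis of (1), so Theorem \ref{thm: proof_thurston in dsg} applies directly. The twisted version is the same argument with the twisted Liouville form, which differs from the untwisted one by the pullback of $\beta$; this difference is basic and does not affect the relative nondegeneracy.

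For (2), the plan is to realize $X/G\to BG$ as a symplectic fibration. The fiber over the point of $BG$ is $X$ with $\omega_X$. The $2$-shifted moment map $\mu$ supplies the $G$-equivariant closed extension $\omega_X-\langle\mu,\cdot\rangle$, which descends to a relative closed form on $X/G$ over $BG$; relative nondegeneracy holds because the relative tangent complex pulls back to $\mathbb{T}_X$. The base carries $\omega_{BG}$, the $2$-shifted structure induced by an invariant pairing on $\mathfrak{g}$. The "certain conditions" are that the equivariant extension is closed in the absolute sense on $X/G$, and we assume exactly that to invoke the theorem.

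For (3), we use the AKSZ/PTVV mapping stack theorem. Applied to the Lagrangian fibration $p$ over the $(n-1)$-shifted symplectic base $S$, together with the Lagrangian $f$, it yields relative shifted symplectic data on $L\to S$. Transgressing along $X$ by integrating against the $m$-orientation of the Calabi–Yau $X$ gives an $(n-m-1)$-shifted relative structure on $\Pi:\mathsf{Map}(X,L)\to\mathsf{Map}(X,S)$, and $\mathsf{Map}(X,S)$ is $(n-m-1)$-shifted symplectic.

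We expect (3) to be the main obstacle. Relative nondegeneracy will only hold on a dense Zariski-open locus, namely where the relevant relative tangent complexes are perfect of the right amplitude. So we plan to restrict to that open derived substack and verify that the extension hypothesis of Theorem \ref{thm: proof_thurston in dsg} survives the restriction.
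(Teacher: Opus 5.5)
\textbf{Gap 1: what Theorem \ref{thm: proof_thurston in dsg} actually delivers.} You describe the theorem as taking $\omega_X=\Omega+\pi^*\sigma$, with $\sigma=\omega'$ the base form, and concluding by the five lemma from ``diagonal blocks'' $\Omega_{rel}^\flat$ and $\pi^*\sigma^\flat$. That mechanism is false, because $\Omega$ has its own horizontal part. Since $\Omega_{rel}^\flat$ is invertible, one can split $\mathbb{T}_X\simeq\mathbb{T}_{X/S}\oplus\pi^*\mathbb{T}_S$ along the $\Omega$-orthogonal complement. In that splitting the horizontal block of $(\Omega+c\,\pi^*\sigma)^\flat$ is $C+c\,\sigma_S^\flat$, with $C=\eta^\vee[n]\circ\Omega^\flat\circ\eta$, and not $\sigma_S^\flat$.

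Here is a concrete counterexample. Take $X=S\times F$ with $\mathbb{T}_S\not\simeq 0$ and $\Omega=\mathrm{pr}_F^*\omega_F-\mathrm{pr}_S^*\sigma$. Every hypothesis holds, yet $\Omega+\pi^*\sigma=\mathrm{pr}_F^*\omega_F$ is degenerate. Even classically one needs $\Omega+K\pi^*\sigma$ with $K\gg 0$.

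The paper replaces ``$K$ large'' by a generic $c\in\K$ chosen so that $-c$ avoids the eigenvalues of $M=(\sigma_S^\flat)^{-1}C$ at the generic points of $X$. These eigenvalues vary over $X$, so this only produces a symplectic structure on a dense Zariski-open $U\subseteq X$ (all of $X$ when $X$ is proper), and it requires $X$ and $S$ to be quasi-compact. Consequently, in (1) and (2) you cannot conclude a global structure with $c=1$. The theorem gives exactly Propositions \ref{prop: conormal} and \ref{prop: moment quotient}: quasi-compactness is added and the conclusion holds on a dense open substack. The paper likewise justifies the unqualified ``on $N^*[n]f$'' in (1) only in this dense-open form, or when $N^*[n]f$ is proper.

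\textbf{Gap 2: the missing inputs in (3) and (2).} The same misreading misplaces the difficulty in (3). Relative nondegeneracy of $\Pi$ holds everywhere. AKSZ transfers $f$ and $p$ to an $(n-m)$-shifted Lagrangian and an $(n-m)$-shifted Lagrangian fibration, and Proposition \ref{prop: way of inducing symp fib structure} composes them; your relative transgression would give the same. Moreover, $\mathbb{T}_{\mathsf{Map}(X,L)/\mathsf{Map}(X,S)}$ at $\phi$ is $R\Gamma(X,\phi^*\mathbb{T}_{L/S})$, which is perfect since $X$ is smooth and proper. The dense open comes only from the horizontal block above.

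What is actually missing in (3) is the input $\Omega$. You never say where an absolute closed $(n-m-1)$-shifted $2$-form on $\mathsf{Map}(X,L)$, restricting to the fiber structures, comes from. Restriction to opens is harmless; existence is the issue. The paper assumes a compatible $\Omega_L\in\mathcal{A}^{2,cl}(L,n-1)$ for $p\circ f$, transgresses it as $\Omega=\int_{[X]}\mathrm{ev}^*\Omega_L$, and also assumes the mapping stacks are quasi-compact.

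Your condition in (2) is not well-posed either. The Cartan term $\langle\mu,\cdot\rangle$ pairs $\mu$ with the weight-one generators of $DR(BG)$, so $\omega_X-\langle\mu,\cdot\rangle$ is not an element of $\mathcal{A}^{2,cl}(X/G,2)$. The moment map only yields the Lagrangian structure on $X/G\to T^*[3]BG$, not a closed $2$-form extending $\omega_X$. The usable hypothesis is the paper's: an independent $\Omega\in\mathcal{A}^{2,cl}(X/G,2)$ whose pullback along $X\to X/G$ is homotopic to $\omega_X$.

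Your constructions of the relative structures themselves are reasonable alternatives to the paper's use of Proposition \ref{prop: way of inducing symp fib structure}. These are $N^*[n]f\simeq T^*[n-1](X/S)$ with the relative Liouville form, and $\omega_X$ viewed as a $G$-invariant relative form over $BG$.
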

Lastly, we also provide a canonical construction of a shifted symplectic structure on the source for an $n$-shifted symplectic fibration morphism of affine derived schemes. 
In brief, we have:

\begin{theorem} \label{thm:affine_model}
   There is a canonical affine local model for an $n$-shifted symplectic fibration structure on the map $\spec\beta: \spec A \longrightarrow (\spec B, \omega_B)$ of derived affine schemes, induced from a submersion $\beta:B\rightarrow A$ of standard form cdgas (cf. Section \ref{sec:affine symplectic fibration}). 
\end{theorem}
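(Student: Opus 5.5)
The plan is to work entirely with the explicit Darboux-type models of \cite{Brav}. I would replace $B$ by a minimal standard form cdga in Darboux form for $\omega_B$, and view $A$ as a cell extension of $B$ via the submersion $\beta$. In degree zero this means $A(0)$ is smooth over $B(0)$. In higher degrees $A$ is obtained from $B\otimes_{B(0)}A(0)$ by freely adjoining new graded generators, and the differential is arranged so that the relative cotangent complex $\mathbb{L}_{A/B}$ is represented by the free module on these new generators.

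The first step is to choose relative coordinates. Following the proof of the Darboux theorem in \cite{Brav}, done now over $B$, I would pick fibre coordinates $x^{\rm rel}_i$ in degree $0$ (étale coordinates for $A(0)$ over $B(0)$) and dual generators $y^{\rm rel}_j$ in degree $n$ (for $n<0$). These are paired so that the relative closed $2$-form
\[
\omega_{A/B}=\sum_i d_{dR}x^{\rm rel}_i\, d_{dR}y^{\rm rel}_i
\]
(plus the middle-degree terms when $n$ is even) is nondegenerate on $\mathbb{L}_{A/B}$. This gives the $n$-shifted symplectic fibration structure of Definition \ref{defn_shifted symp fibration} on $\spec\beta$. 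It is canonical in the sense that it is determined by the chosen relative Darboux coordinates.

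The second step is to lift $\omega_{A/B}$ to an absolute form on $A$. I would use the same formula, now read with the absolute de Rham differential $d_{dR}$, to get a closed $2$-form $\tilde\omega$. I would then set $\omega_A=\beta^*\omega_B+\tilde\omega$.

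The third step is to adjust the differential. Guided by the Darboux form, I would add to the differential of the new generators a Hamiltonian $H_A=H_B+H^{\rm rel}$ satisfying the classical master equation $\{H_A,H_A\}=0$. This makes $\omega_A$ compatible with the differential, so that it is a genuine closed form and $\dR H_A$ is exact in the required sense.

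The final step is to check nondegeneracy through the exact triangle $\beta^*\mathbb{L}_B\to\mathbb{L}_A\to\mathbb{L}_{A/B}$. Since the form is block-triangular in these coordinates, with diagonal blocks $\omega_B$ and $\omega_{A/B}$, it is nondegenerate. The pullback of $\omega_A$ to each fibre is $\omega_{A/B}$, which gives compatibility in the sense of Definition \ref{def:compatibility}.

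I expect the main obstacle to be the master equation when the relative Hamiltonian depends on base coordinates. Cross terms $\{H_B,H^{\rm rel}\}$ need not vanish. My first attempt would be to construct $H^{\rm rel}$ order by order in the base generators, solving the resulting obstruction equations degree by degree. This is an obstruction-theoretic argument of the kind used in the proof of Brav–Bussi–Joyce. If some cross terms cannot be removed, I would restrict to a Zariski-open neighbourhood where the submersion is a product, so that the model becomes the product symplectic form.
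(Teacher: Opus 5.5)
\textbf{The model you reach is the paper's, but the route you put first does not work.} Your fallback model is what the paper builds: Darboux base, fibre generators paired by $\sum d_{dR}x^{\rm rel}d_{dR}y^{\rm rel}$, $\omega_A=\beta^*\omega_B+\tilde\omega$, and non-degeneracy checked block by block. Your $\omega_A$ is the paper's $\Omega$, i.e.\ its $\Omega+c\,(\spec\beta)^*\sigma$ at $c=0$. Your main line, however, claims a normal form for an \emph{arbitrary} given submersion, and that has a genuine gap.

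\begin{itemize}
\item For a given $A$ you cannot ``adjust the differential'': doing so replaces $A$ by a different cdga. The Darboux theorem of \cite{Brav} is absolute, so it gives no relative Darboux theorem over $B$ to supply your fibre coordinates.
\item The obstacle you single out does not actually occur. Suppose $\beta(x)=\tilde x$ and $\beta(y)=\tilde y$, and $d_A$ is Hamiltonian for $\sum d_{dR}\tilde x\,d_{dR}\tilde y+\sum d_{dR}x^{\rm rel}d_{dR}y^{\rm rel}$ with $H_A=\beta(H_B)+H^{\rm rel}$. Then the cdga condition $d_A\circ\beta=\beta\circ d_B$ on the generators says exactly $\partial H^{\rm rel}/\partial\tilde x^{-i}_j=\partial H^{\rm rel}/\partial\tilde y^{n+i}_j=0$ for all $i\ge 0$. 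For $i=0$, use $\partial\beta^0(b)/\partial\tilde x^0_j=\beta^0(\partial b/\partial x^0_j)$ and the fact that $\beta^0(b)$ has zero derivative in the fibre coordinates.
\item So $H^{\rm rel}$ is forced to be independent of the base coordinates. Then $\{\beta(H_B),H^{\rm rel}\}=0$ identically, and there is nothing to solve order by order.
\item The real limitation runs the other way. A fibration whose fibrewise Hamiltonian genuinely varies with the base has no presentation of your split shape. An example is a relative critical locus of a function $g$ with $\partial^2 g/\partial\tilde x\,\partial u\neq 0$, which is still relative $(-1)$-shifted symplectic.
\item Submersions are not Zariski-locally products in general. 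So your fallback cannot be justified by restricting to such a neighbourhood.
\end{itemize}

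\textbf{How the paper avoids this.} As the paper reads the statement, it asserts the \emph{existence} of a model, and the paper builds the product-type model from the outset.
\begin{itemize}
\item It fixes $n=-2\ell-1$ odd, so there is no self-dual middle degree.
\item Your degree bookkeeping, with generators only in degrees $0$ and $n$, is too small for $n\le -2$. You need fibre generators $u^{-i}$ and $v^{n+i}$ for every $0\le i\le\ell$.
\item It takes $A$ free over a smooth $B(0)$-algebra $A(0)$ with \'etale coordinates $(\tilde x^0,u^0)$, on generators $\tilde x,\tilde y,u,v$.
\item It \emph{defines} $d_A\tilde x=\beta(d_Bx)$ and $d_A\tilde y=\beta(d_By)$. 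It takes $d_Au$ and $d_Av$ from a superpotential $G$ in the fibre variables only, satisfying the master equation.
\end{itemize}
Then $\beta$ is a cdga submersion by construction. $H_A=\beta(H)+G$ involves disjoint variables, so the master equation splits. $\gamma=\sum d_{dR}u\,d_{dR}v$ is a relative $n$-shifted symplectic structure, and $\Omega=\gamma+(\spec\beta)^*\sigma$ is closed. Finally, $\Omega+c\,(\spec\beta)^*\sigma$ is block-diagonal with blocks $\gamma^\flat$ and $(1+c)(\spec\beta)^*\sigma^\flat$, hence symplectic for $c\neq -1$. If you replace your first and third steps by this direct construction, the rest of your argument coincides with the paper's.
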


\noindent This paper is organized as follows: We recall the related terminology from DSG in Section \ref{sec:terminology}. Then we consider, in Section \ref{sec:Relative structure}, relative structures and present numerous examples of shifted symplectic fibrations based on various work in the literature. We prove Theorem \ref{thm:2} in Section \ref{sec:Ind_symp_fib}, where we also introduce compatibility. The precise version of Theorem \ref{thm:1} (cf. Theorem \ref{thm: proof_thurston in dsg}) will be proven in Section \ref{sec:results}.  We discuss, in Section \ref{sec:applications}, applications of Theorem \ref{thm:1} and establish the proof of Corollary \ref{cor: app to Thm1}. We close the paper by providing, in Section \ref{sec:affine symplectic fibration}, a local (affine) model of our construction given in the proof of Theorem \ref{thm: proof_thurston in dsg}, which establishes Theorem \ref{thm:affine_model}.

\paragraph{Conventions.} Throughout the paper, all cdgas will be graded in non-positive degrees and  over $ \mathbb{K}.$ We will always consider $ \K $-schemes/stacks and assume that all classical $ \K $-schemes are \emph{locally of finite type}, and that all derived $ \K $-schemes/stacks $ {X} $ are  \emph{locally finitely presented.}

\paragraph{Acknowledgments.}
This work was supported by the Scientific and Technological Research Council of Türkiye (TÜBİTAK) under the 1001-Scientific and Technological Research Projects Funding Program (Project No: 125F117). 

All authors warmly thank the \textit{Higher Structures Research Group} in the Math Department at
Middle East Technical University for creating a stimulating research environment.
\newpage

\section{Recollection: Some derived symplectic geometry}  \label{sec:terminology}

This section gathers some background material on derived algebraic/symplectic geometry, including key results from the literature. We mostly follow and review \cite{Calaque2019, CPTVV, PTVV, ToenHAG}. 

\subsection{Derived schemes and Artin stacks}

Let us begin with our algebraic model. Denote by $ cdga_{\K}^{\leq 0}$  the $\infty$-category\footnote{We actually mean the \emph{$\infty$-category} associated to the model category $ cdga_{\K}^{\leq 0} $,  with its natural model structure    for which equivalences are quasi-isomorphisms, and fibrations are epimorphisms in strictly negative degrees.} of \bfem{commutative dg $\mathbb{K}$-algebras in non-positive degrees}, where an \textit{object} $A$ in $ cdga_{\K}^{\leq 0}$ consists of \begin{enumerate}
            \item [(1)] a collection $ \{A^i\}_{i\leq0}$, where each $A^i$ is the $\K$-vector space of degree $i$ elements, 
            \item [(2)] a $\K$-bilinear, associative, supercommutative multiplication $A^m \otimes A^n \xrightarrow{\cdot} A^{m+n}$, and
            \item  [(3)] a unique square-zero derivation of degree 1  (the differential) $d$  on $A$ satisfying the graded Leibniz rule
        \begin{equation*}
             d (a \cdot b)= (d a) \cdot b + (-1)^{|a|}a \cdot (d b) 
        \end{equation*} for all $a\in A^{|a|}, b\in A^{|b|}.$
        \end{enumerate} We denote such objects by $(A, d)$ or just $A$. Note that $A$ has a decomposition $A=\bigoplus_i A^i.$ 
Letting $ CAlg_{\mathbb{K}} $ be the \textit{category of commutative $\K$-algebras},
we then have the following diagram of higher spaces \cite{Vezz2}:
\[\begin{tikzcd}
    {CAlg_{\mathbb{K}}} && Sets \\
    && Grpd \\
    {cdga_{\mathbb{K}}^{\leq 0}} && {Grpd_{\infty}}
    \arrow["schemes", from=1-1, to=1-3]
    \arrow["stacks"{description}, from=1-1, to=2-3]
    \arrow[from=1-1, to=3-1]
    \arrow["{higher \ stacks}"{description}, from=1-1, to=3-3]
    \arrow[from=1-3, to=2-3]
    \arrow[from=2-3, to=3-3]
    \arrow["{derived \ stacks}"', from=3-1, to=3-3]
\end{tikzcd}\]

This diagrammatic description reveals the general principle when we consider higher spaces; in this context, we usually refer to them via the sheaf-theoretic formulation and regard them as certain homotopy sheaves, as in the diagram above, leading to the following definitions.

\begin{definition}
 An object $X$ of $\mathsf{Fun}(CAlg_{\mathbb{K}}, Sets)$ is called an \bfem{affine $\K$-scheme} if $X\simeq \spec A$ for  $A\in CAlg_{\mathbb{K}}$; and a \bfem{$\K$-scheme} if it has a cover by Zariski open affine $\K$-schemes. Denote by $\mathsf{Aff}_{\K}$ the space of such affine $\mathbb{K}$-schemes. 

 Here,  $\spec A$ is the usual prime spectrum of $A\in CAlg_{\mathbb{K}}$ such that the equivalence of categories $\mathsf{Aff}^{\mathrm{op}}\simeq CAlg_{\mathbb{K}}$ is given by the  spectrum functor $\spec: A \mapsto \spec A \in \mathsf{Aff}$.
\end{definition}

In derived geometry, there also exists an appropriate concept of a \emph{spectrum functor} described as the right adjoint to the global algebra of functions functor 
\begin{equation*}
\Gamma: dSt_{\K} \leftrightarrows (cdga_{\K}^{\leq 0})^{\mathrm{op}}: \spec,
\end{equation*} where $dSt_{\K}$ denotes the \bfem{$\infty$-category of derived stacks}, with objects being \emph{$\infty$-sheaves on the site $(dAff)^{\mathrm{op}}:= cdga_{\mathbb{K}}^{\leq 0}$}. We formally denote elements in this opposite category $ (cdga_{\mathbb{K}}^{\leq 0})^{\mathrm{op}} $ by $\spec A$. In the spirit of previous discussion, any derived stack $X$ can also be viewed as $\infty$-groupoid-valued homotopy sheaf, and hence we have:

\begin{definition}
    A \bfem{derived stack} $ {X} $ is  a  functor  $$ {X}:  cdga_{\K}^{\leq 0} \rightarrow  Grp_{\infty}, \ \ A\mapsto X(A) \simeq Map_{dStk_{\mathbb{K}}}(\spec A, {X}),$$ satisfying a descent condition.
For more details, we refer to \cite{ToenHAG}. 
\end{definition}
\vspace{1in}

Let us now focus on particular, more tractable derived stacks with good properties.

\begin{definition}
    An object ${X}$ in $dSt_{\mathbb{K}}$ is called an \bfem{affine derived $\mathbb{K}$-scheme} if  $X\simeq \spec A $ for some cdga $A \in cdga_{\K}^{\leq 0}$. An object $ X$ in $dSt_{\mathbb{K}}$ is then called a \bfem{derived $\mathbb{K}$-scheme} if it can be covered by Zariski open affine derived $\mathbb{K}$-schemes.
\end{definition} 
It should then be noted that any affine derived $\mathbb{K}$-scheme $X$ can be coreprensented by a cdga $A$ such that $\spec A$ is viewed as the functor \[ \spec A: B  \longmapsto Hom_{cdga_{\mathbb{K}}^{\leq 0}} (A,B).\]
Denote by $dSch_{\K} \subset dSt_{\mathbb{K}}$ the full \bfem{$\infty$-subcategory of  derived $\mathbb{K}$-schemes}, and we simply write $dAff_{\K} \subset dSch_{\mathbb{K}}$ for the full \bfem{$\infty$-subcategory of  affine derived $\mathbb{K}$-schemes.}

\begin{remark}
\begin{enumerate}
            \itemsep=0.2cm
    \item [ ]
    \item     The notation $\spec (-)$ for cdgas must be understood in a  \emph{categorical sense}, contrary to ordinary algebraic geometry, where it actually refers to the prime spectrum of a ring. 
    \item The use of $\spec$-notation is still relevant in geometric terms because the points of a derived scheme correspond to those of its truncation, which is simply an ordinary scheme. That is, for a cdga $A$, the points of $\spec A$ are just the ones in its truncation $\spec (H^0(A))$, which is the ordinary prime spectrum of $H^0(A)\in CAlg_{\mathbb{K}}$, see \cite{ToenHAG} . 
    \item As the points of $\spec A$ are just the ones in $\spec (H^0(A))$, we can consider a derived $\K$-scheme $ X$ as \emph{an infinitesimal thickening of its truncation}.
\end{enumerate}
    \end{remark}

Let us also mention another useful type of derived stacks, which generalizes derived schemes but still with reasonable and tractable properties: the concept of \emph{derived Artin $\K$-stack}. 

Roughly speaking, we call ${ X} \in dSt_{\K}$ a \emph{derived Artin $\K$-stack} if it can be locally represented by an affine derived $\K$-scheme with respect to the ``smooth topology''. Thus, we require the existence of a ``smooth'' surjection $\varphi: U \rightarrow  X$ (of some relative dimension) so that $U$ is a disjoint union of affine derived schemes. Drawing from the foundational framework of To\"en and Vezzosi \cite{ToenHAG}, the concept of geometricity for derived stacks is defined inductively relative to a chosen class of morphisms:

\begin{definition}[{cf. \cite[Definition 1.3.3.1]{ToenHAG}}] \label{def:n_geometric}
\begin{enumerate}
    \item A derived stack $X$ is \bfem{$(-1)$-geometric} if it is representable (i.e., an affine derived $\K$-scheme).
    \item A morphism of derived stacks $f: X \rightarrow Y$ is \bfem{$(-1)$-representable} if for any affine derived $\K$-scheme $U$ and any morphism $U \rightarrow Y$, the homotopy pullback $X \times^h_Y U$ is an affine derived $\K$-scheme.
\end{enumerate}
For $m \ge 0$, we define inductively:
\begin{enumerate}
\setcounter{enumi}{2}
    \item A derived stack $X$ is \bfem{$m$-geometric} if the diagonal morphism $X \rightarrow X \times^h X$ is $(m-1)$-representable, and $X$ admits an \bfem{$m$-atlas} (a smooth surjective morphism $\coprod U_i \rightarrow X$ from a disjoint union of affine derived schemes such that each $U_i \rightarrow X$ is $(m-1)$-representable).
    \item A morphism $f: X \rightarrow Y$ is \bfem{$m$-representable} if for any affine derived scheme $U \rightarrow Y$, the homotopy pullback $X \times^h_Y U$ is an $m$-geometric stack.
\end{enumerate}
\end{definition}

\begin{definition}(Formal-\cite[Definition 1.3.3.1]{ToenHAG})
An object $X \in dSt_{\K}$ is called a \bfem{derived Artin $\K$-stack} if it is $m$-geometric for some integer $m \geq 0$, and the underlying classical stack is 1-truncated. 
\end{definition}

The upshot is that any such object $ X$ of $ dSt_{\K}$ comes with a smooth surjective morphism  $\varphi: U \rightarrow  X$ with $U$ a  derived $\K$-scheme. We call such morphism an \bfem{atlas.} Therefore, the following definition will be sufficient for our purposes.

\begin{definition} (Informal)
    By a \bfem{derived Artin $\K$-stack},  we mean an object $X$ of $ dSt_{\K}$ possessing an atlas (smooth of some relative dimension) near each point of $X$.
\end{definition} 

\paragraph{Good affine models.} In addition to the discussions above, every derived Artin $ \K$-stack is locally modeled on  atlases formed by special types of cdgas.
Let us begin with a relevant concept.

\begin{definition}\label{defn_standard form cdgas}
    We say $ A \in cdga_{\K}^{\leq 0}$ is of \bfem{standard form} if  $A^0$ is a smooth finitely generated $\mathbb{K}$-algebra; the  module $\Omega^1_{A^0}$ of K\"{a}hler differentials is free $A^0$-module of finite rank; and the graded algebra $A$ is freely generated over $A^0$ by finitely many generators, all in negative degrees. 
\end{definition}

If $ A$ is a standard form cdga, then we may assume that there exist $k\in \mathbb{Z}_{\geq 0}$ and generators $x_1^{-i}, x_2^{-i}, \cdots, x_{m_i}^{-i} $ in $A^{-i}$ (after localization, if necessary) with $i= 1, 2, \cdots, k$  \ and $m_i \in \mathbb{Z}_{\geq 0}$ such that 
\begin{equation}
    A = A(0) \big[x_j^{-i} : i= 1, 2, \dots, k, \ j= 1,2, \dots, m_i\big].
\end{equation} In other words, we can consider  $A$ as a \emph{graded polynomial algebra} over $A(0)$ on negatively graded finitely many generators. For more details, see \cite[Example 2.8]{Brav}.

\paragraph{Submersions.} Similarly to the local models for spaces, there is a suitable class of morphisms in $ cdga_{\K}^{\leq 0}$ for doing explicit computations.
 
 \begin{definition}\label{def_submersion}
     A morphism $\beta: A \rightarrow B$ of standard form cdgas is called a \bfem{submersion} if the induced morphism $\beta_*: \Omega_A^1 \otimes_{A} B \rightarrow \Omega_B^1$ is injective in each degree.
 \end{definition}
 
 \begin{remark}
     Submersions are in fact a suitable class of morphisms in $ cdga_{\K}^{\leq 0}$ for doing explicit computations concerning relative cotangent complexes. In this regard, if $\beta: A \rightarrow B$ is a submersion of standard form cdgas, then the module of relative K\"{a}hler differentials $\Omega_{B/A}^1$ is a model for the relative cotangent complex, and hence we take $\mathbb{L}_{B/A}=\Omega_{B/A}^1$. 
 \end{remark}

\subsection{Functions, quasi-coherent sheaves, and perfect complexes} 

When $X=\spec A$ is an affine $\K$-scheme, a function $f$ is simply an element of $A$. If $X$ is a scheme, then a \textit{global function} on $X$ consists of the data of functions on each affine open subset which are compatible on overlaps. This definition can be extended to the case of derived stacks using the limit taken in the proper category: We then define the \bfem{algebra of functions} on $X \in dSt_{\K}$ by \begin{equation}\label{defn_O(X) as inf limit}
    \mathcal{O}(X)= \displaystyle \lim_{A \in cdga_{\K}^{\leq 0}, \ \spec A \rightarrow X} A.
\end{equation} Likewise,  the \bfem{$\infty$-category of quasi-coherent sheaves on $X$} is given by 
\begin{equation}\label{defn_QCoh(X) as inf limit}
    {QCoh}(X)= \displaystyle \lim_{A \in cdga_{\K}^{\leq 0}, \ \spec A \rightarrow X} \mathrm{Mod}_A,
\end{equation}where we take the limit in the $\infty$-category of cdgas. 

Within the $\infty$-category of quasi-coherent sheaves $QCoh(X)$, a particularly well-behaved class is that of \emph{perfect complexes}. The foundational properties of perfect modules and relative cotangent complexes were established in the homotopical algebraic geometry framework of To\"en and Vezzosi \cite{ToenHAG}. We extract and recall the relevant statements here to seamlessly support our subsequent main proofs.

An $A$-module $M$ over a derived affine scheme $\spec A$ is called \bfem{perfect} if it is strongly dualizable. Perfect modules satisfy several important finiteness and descent properties:

\begin{theorem} \label{thm:HAG_perfect_properties} \emph{(Properties of Perfect Modules, \cite{ToenHAG})}
    Let $X$ be a derived Artin $\K$-stack and $\mathcal{E}$ an object in $QCoh(X)$.
    \begin{enumerate}
        \item \label{item:perfect_fp} \emph{(Finitely presented vs. Perfect)} In the stable $\infty$-category of quasi-coherent modules, finitely presented objects are exactly the perfect complexes \emph{\cite[Corollary 1.2.3.8]{ToenHAG}}.
        \item \label{item:perfect_stack} \emph{(Stack property)} The presheaf assigning to each derived affine scheme its $\infty$-category of perfect modules satisfies descent and forms a stack over the smooth (or \'{e}tale) topology \emph{\cite[Corollary 1.3.7.4]{ToenHAG}}.
        \item \label{item:perfect_conservative} \emph{(Conservativity)} Base change functors along smooth covering families are conservative.
        \item \label{item:perfect_pointwise} \emph{(Conservativity on points)} A morphism of perfect complexes is an equivalence globally if and only if its derived pullback to every geometric point is an equivalence \emph{\cite[Corollary 1.3.2.7]{ToenHAG}}.
        \item \label{item:perfect_vanishing} \emph{(Vanishing locus)} If $\mathcal{E}$ is a perfect module on $X$, the locus where $\mathcal{E} \simeq 0$ defines a formal Zariski-open derived substack $U \subseteq X$ \emph{\cite[Proposition 1.2.10.1]{ToenHAG}}. Specifically, a morphism from a derived affine scheme $S$ to $X$ factors through $U$ if and only if the derived pullback of $\mathcal{E}$ to $S$ vanishes.
    \end{enumerate}
\end{theorem}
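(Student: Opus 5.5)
This theorem is a recollection of results from To\"en--Vezzosi \cite{ToenHAG}, so the plan is to reduce each item to the affine case and then glue along a smooth atlas. The gluing uses the description $QCoh(X)=\lim_{\spec A\to X}\mathrm{Mod}_A$ from \eqref{defn_QCoh(X) as inf limit}. I would treat the items in the order (1), (2), (3), (4), (5), since the later items use the earlier ones.

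For (1), the affine case is the standard fact that in $\mathrm{Mod}_A$ the compact objects are exactly the retracts of finite cell $A$-modules. These are precisely the strongly dualizable modules: a dualizable $M$ is compact because $\mathrm{Map}(M,-)\simeq \mathrm{Map}(A,M^\vee\otimes -)$ and $\otimes$ commutes with filtered colimits, and conversely the dualizable objects form a thick subcategory containing $A$. For (2), I would first note that perfect modules are stable under base change and that descent for $\mathrm{Mod}_{(-)}$ holds in the smooth (hence \'etale) topology by faithfully flat descent. It then remains to check that being perfect is a local property. A module is dualizable precisely when the coevaluation/evaluation triangle identities can be satisfied, and these identities can be verified after a faithfully flat base change; so dualizability descends, and the subprestack of perfect objects is again a stack.

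For (3), a smooth surjection $\coprod \spec B_i\to \spec A$ is faithfully flat, so $M\otimes_A^{\mathbb L}(\prod B_i)\simeq 0$ forces $H^*(M)\otimes_{H^0A}H^0(\prod B_i)=0$, and hence $M\simeq 0$. Applying this to cones gives conservativity. For a general stack, one passes through an atlas, using that $QCoh(X)$ is the limit of the affine categories.

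Item (4) is the main step. Again reduce to $X=\spec A$ and take the cone $C$ of the morphism, which is perfect and has derived fibers $C\otimes_A k(p)\simeq 0$ at all geometric points $p$. I would then argue by the derived Nakayama lemma. If $C\not\simeq 0$, let $m$ be the top degree in which $H^m(C)\neq 0$; such an $m$ exists because $C$ is perfect over a connective $A$, so its cohomology is bounded above. By right exactness, $H^m(C\otimes_A k(p))=H^m(C)\otimes_{H^0A}k(p)$. Since $H^m(C)$ is finitely generated over $H^0(A)$, the classical Nakayama lemma gives a point $p$ where this is nonzero, which is a contradiction. The obstacle here is justifying the top-degree formula and finite generation. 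Finite generation follows from perfectness by induction on the cell structure, using that $A$ is locally finitely presented (our conventions).

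For (5), define $U(\spec B)$ as the set of maps for which the pullback $\mathcal{E}_B\simeq0$. By (4), a map lands in $U$ exactly when it does so on points of the truncation. Applying the affine argument of (4) pointwise, the support of $\mathcal{E}$ is the union of the supports of the finitely generated modules $H^i(\mathcal{E})$ over the finitely many relevant degrees $i$. This union is closed, so its complement is a Zariski open of the truncation. Since open subsets of $t_0X$ lift uniquely to open derived substacks, $U$ is the corresponding open derived substack, and stability of $U$ under base change together with the stack property from (2) globalizes the construction.
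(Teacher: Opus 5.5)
The paper gives no argument for this statement. Each item is attributed to To\"en--Vezzosi's HAG~II (Cor.~1.2.3.8, Cor.~1.3.7.4, Cor.~1.3.2.7, Prop.~1.2.10.1), and item (3) has no reference at all. Your proposal therefore takes a genuinely different route: it gives a self-contained sketch of the standard arguments, and that sketch is essentially sound.
\begin{itemize}
\item For (1): compact $=$ dualizable in $\mathrm{Mod}_A$, via the thick subcategory generated by $A$.
\item For (2): flat descent for modules, plus descent of dualizability.
\item For (3): the flat base-change formula $H^*(M\otimes_A B)\cong H^*(M)\otimes_{H^0A}H^0B$.
\item For (4): the derived Nakayama lemma, applied to the top non-vanishing cohomology group.
\item For (5): built on (4).
\end{itemize}
Your route makes the dependencies visible, and it shows exactly where finiteness enters (noetherian $H^0A$ and finitely generated cohomology, supplied by the paper's local-finite-presentation convention). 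The citations instead give generality, since HAG~II works in an arbitrary homotopical algebraic context, and they avoid re-proving flat descent.

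Two points need tightening.

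\textbf{Item (1).} Your plan of reducing to the affine case and gluing along an atlas cannot work here. Compactness in $QCoh(X)$ is not a local property, and the global statement is false without finiteness hypotheses. Take $X=\coprod_{\mathbb{N}}\spec\K$, so $QCoh(X)\simeq\prod_{\mathbb{N}}\mathrm{Mod}_{\K}$. Let $\mathcal{O}_n$ be $\K$ on the $n$-th component and $0$ elsewhere; then $\mathcal{O}_X\simeq\bigoplus_n\mathcal{O}_n$. However, $\mathrm{Map}(\mathcal{O}_X,\mathcal{O}_X)\simeq\prod_n\K$, which is not $\bigoplus_n\K$, so $\mathcal{O}_X$ is perfect but not compact. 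Item (1) must be read affine-locally, as in the cited corollary. That is exactly what you proved, but you should say so explicitly rather than fold it into the gluing plan.

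\textbf{Item (5).} The phrase ``finitely many relevant degrees'' needs a reason. Over a non-discrete $A$, a perfect module can have cohomology unbounded below: take $\mathcal{E}=A$ with $A=\K[z]$, $z$ free in degree $-2$ and $d=0$. So the union over all $i$ of the supports of $H^i(\mathcal{E})$ is a priori infinite, and need not be closed. There are two fixes.
\begin{itemize}
\item Bound the relevant degrees by the Tor amplitude of $\mathcal{E}$. Your top-degree argument, run over the localization $A_p$, places the top degree inside that range.
\item More simply, replace $\mathcal{E}$ by $\mathcal{E}\otimes_A H^0(A)$. This is perfect over the discrete noetherian ring $H^0(A)$, so its cohomology is bounded and finitely generated, and it has the same derived fibres $\mathcal{E}\otimes_A k(p)$.
\end{itemize}
The same observation streamlines (4). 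There, $H^m(C)\cong H^m(C\otimes_A H^0A)$ is the top cohomology of a perfect complex over a discrete ring, hence finitely generated. This needs no finite-presentation hypothesis on $A$ and no induction on cells.
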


\subsection{Cotangent and de Rham complexes}
\paragraph{The relative cotangent complex.} The deformation theory of derived stacks and the definition of shifted differential forms rely centrally on the \textit{relative cotangent complex}. For a given morphism $A \rightarrow B$ of cdgas, the \bfem{relative cotangent complex} $\mathbb{L}_{B/A} \in \mathrm{Ho}(B-\mathrm{Mod})$ universally corepresents the space of derived derivations. Globalizing this concept, any morphism of derived Artin stacks $f: X \rightarrow Y$ possesses a relative cotangent complex $\mathbb{L}_{X/Y} \in QCoh(X)$. 
This construction satisfies several fundamental properties that we will extensively use later on:
\begin{theorem} \label{thm:hag_cotangent_properties} \emph{(Properties of Cotangent Complexes, \cite{ToenHAG})}
    \begin{enumerate}
        \item \label{item:cotangent_fp} \bfem{Finite presentation} \emph{\cite[Proposition 2.2.2.4]{ToenHAG}}: If $X$ and $Y$ are locally finitely presented derived Artin $\K$-stacks and $f:X\rightarrow Y$ is locally of finite presentation, then $\mathbb{L}_{X/Y}$ is a coherent object (almost perfect complex) in $QCoh(X)$. If, additionally, $X$ and $Y$ are smooth (or the morphism is perfect), then $\mathbb{L}_{X/Y}$ is perfect.
        \item \label{item:cotangent_transitivity} \bfem{Transitivity sequence} \emph{\cite[Lemma 1.4.1.16]{ToenHAG}}: For composable morphisms of derived stacks $X \xrightarrow{f} Y \xrightarrow{g} Z$, there is a canonical exact triangle (homotopy cofiber sequence) of perfect tangent complexes in $QCoh(X)$:
        \begin{equation}
            f^* \mathbb{L}_{Y/Z} \longrightarrow \mathbb{L}_{X/Z} \longrightarrow \mathbb{L}_{X/Y} \longrightarrow f^* \mathbb{L}_{Y/Z}[1].
        \end{equation}
        \item \label{item:cotangent_basechange} \bfem{Base change} \emph{\cite[Lemma 1.4.1.16]{ToenHAG}}: For any homotopy pullback square of derived stacks
        \begin{equation*}
            \begin{tikzcd}
                X' \arrow[r, "g'"] \arrow[d, "f'"'] & X \arrow[d, "f"] \\
                Y' \arrow[r, "g"'] & Y
            \end{tikzcd}
        \end{equation*}
        the canonical base change morphism $(g')^*\mathbb{L}_{X/Y} \longrightarrow \mathbb{L}_{X'/Y'}$ is an equivalence in $QCoh(X')$. In particular, the derived pullback of the relative cotangent complex $\mathbb{L}_{X/Y}$ to a geometric fiber $X_y$ canonically identifies with the cotangent complex of the fiber: $\iota_y^*\mathbb{L}_{X/Y} \simeq \mathbb{L}_{X_y}$.
    \end{enumerate}
\end{theorem}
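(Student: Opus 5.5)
Since all three assertions are standard results of To\"en--Vezzosi, the plan is to recover them from the universal property of $\mathbb{L}_{X/Y}$ rather than from an explicit model. For a morphism $f: X\to Y$ and $M\in QCoh(X)$, $\mathbb{L}_{X/Y}$ corepresents $Y$-derivations:
\[
\mathrm{Map}_{QCoh(X)}(\mathbb{L}_{X/Y},M)\simeq \mathrm{Der}_Y(X,M),
\]
where the right side is the space of retractions of $X\to X[M]$ under $Y$, with $X[M]$ the trivial square-zero extension. Since $QCoh(X)$ is stable, all three statements reduce to statements about these derivation spaces, together with the Yoneda lemma.

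\textbf{Transitivity (item 2).} For $X\xrightarrow{f}Y\xrightarrow{g}Z$ there is a fiber sequence of mapping spaces
\[
\mathrm{Der}_Y(X,M)\to \mathrm{Der}_Z(X,M)\to \mathrm{Der}_Z(Y,f_*M).
\]
The first map forgets that a $Z$-derivation is $Y$-linear. The second restricts along $f$, and it is adjoint to
\[
\mathrm{Map}(f^*\mathbb{L}_{Y/Z},M)\simeq\mathrm{Map}(\mathbb{L}_{Y/Z},f_*M).
\]
The fiber of the restriction map consists exactly of the $Z$-derivations whose composite with $f$ is trivialised, which are the $Y$-derivations. This sequence is natural in $M$, so Yoneda in the stable $\infty$-category $QCoh(X)$ yields the cofiber sequence $f^*\mathbb{L}_{Y/Z}\to\mathbb{L}_{X/Z}\to\mathbb{L}_{X/Y}$.

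\textbf{Base change (item 3).} A $Y'$-derivation of $X'$ into $M$ is a map $X'[M]\to X'$ under $X'$ and over $Y'$. Because $X'\simeq X\times^h_Y Y'$, the universal property of the homotopy pullback identifies such maps with maps $X'[M]\to X$ over $Y$ extending $g'$. This gives
\[
\mathrm{Der}_{Y'}(X',M)\simeq \mathrm{Der}_Y(X,g'_*M)\simeq \mathrm{Map}\big((g')^*\mathbb{L}_{X/Y},M\big),
\]
and Yoneda then shows the base change map is an equivalence. The fiber statement is the special case $Y'=\spec\K\xrightarrow{y}Y$. There $\mathbb{L}_{Y'}=0$, so $\mathbb{L}_{X_y/\spec \K}=\mathbb{L}_{X_y}$.

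\textbf{Finite presentation (item 1).} First work in the affine case $B\to A$ with $A$ finitely presented over $B$. Here $A$ is a retract of a finite cell $B$-algebra, and $\mathbb{L}_{A/B}$ is built from finitely many free cells $A[k]$, one for each generator attached. It is therefore perfect. Under the paper's standing lfp hypotheses one only gets, in general, almost perfectness (coherence) in each degree. For standard form cdgas and submersions, $\mathbb{L}_{A/B}=\Omega^1_{A/B}$ is free of finite rank over $A$, which exhibits perfectness concretely. Smoothness of the maps also forces perfectness: $\mathbb{L}$ is then a projective module of finite rank in degree $0$.

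\textbf{Globalisation to Artin stacks.} Globalising all three statements from affines to derived Artin stacks is the step I expect to be the main obstacle. The plan is induction on the geometricity level $m$ of Definition~\ref{def:n_geometric}. Choose an $m$-atlas $u:U\to X$. By item 2 applied to $U\to X\to Y$, $\mathbb{L}_{X/Y}$ is determined on $U$ by the fiber of $\mathbb{L}_{U/Y}\to\mathbb{L}_{U/X}$. The term $\mathbb{L}_{U/X}$ is known by the inductive hypothesis, since $u$ is $(m-1)$-representable and smooth, so it is perfect of Tor-amplitude $[0,0]$. Descent of $QCoh$ along smooth covers, together with the stack property and conservativity of Theorem~\ref{thm:HAG_perfect_properties}, then shows that the locally defined objects glue to a global object. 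The same properties show that almost-perfectness and perfectness can be checked after pullback to $U$. The delicate point is verifying that the derivation functor is corepresentable at all for stacks that are not bounded: this requires obstruction theory and descent for square-zero extensions. For that step I would invoke the HAG existence result directly rather than reprove it.
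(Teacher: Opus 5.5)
Your proposal is correct and follows essentially the paper's route: the paper gives no argument beyond citing To\"en--Vezzosi, and your sketch is the standard argument behind those cited results (transitivity and base change from the corepresentability of relative derivations plus Yoneda, finiteness via cell attachment in the affine case, and globalisation along smooth atlases with the existence of $\mathbb{L}_{X/Y}$ quoted from HAG). There are only minor slips: your cell argument already shows that a homotopically finitely presented map has a \emph{perfect} cotangent complex, so the hedge that one ``only gets almost perfectness'' is unnecessary and both halves of item 1 follow at once; and for a smooth $(m-1)$-representable atlas of a higher stack, $\mathbb{L}_{U/X}$ is perfect but need not be concentrated in degree $0$ (it is for the $1$-truncated stacks of the paper), which is harmless since you only use perfectness.
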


\paragraph{The de Rham complex.} When $X=\spec A$ is a derived affine scheme with $A$ a cofibrant cdga (e.g. standard form cdgas with the property that $\mathbb{L}_A\simeq \Omega_A^1$), then its \emph{de Rham complex} is given as $$DR(X)= DR(A)\simeq\mathrm{Sym}_A (\Omega^1_A [-1]),$$ where the graded mixed differential is given by the universal derivation $A\rightarrow \Omega^1_A$ extended via the Leibniz rule. Then for $X$ an arbitrary stack, we define its \bfem{de Rham complex} as \begin{equation}\label{defn_DR(X) as inf limit}
        DR(X)= \lim_{A \in cdga, \ \spec A \rightarrow X} DR(A).
    \end{equation} 
Note that any derived stack $X$ has a graded mixed de Rham complex $DR(X)$. If, in addition, $X$ is Artin and $\mathbb{L}_X$ denotes its cotangent complex, then we have 
\begin{equation}
    \mathcal{A}^p(X,n) \simeq \pi_0 Map_{QCoh(X)}(\mathcal{O}_X, \wedge^p\mathbb{L}_X[n]).
\end{equation}

\subsection{Forms and shifted symplectic structures} 
 We now outline  the theory developed by Pantev, Toën, Vaquié and Vezzosi in \cite{PTVV}.   
\begin{definition}
    A \bfem{$p$-form of degree $n$ on $X$} is an $n$-cocycle in $DR^p(X) [p]$. 
    Likewise,  by a \bfem{closed $p$-form of degree $n$ on $X$}, we mean an $n$-cocycle in $\prod_{i \geq p} DR^i(X) [p]$ for the total differential $d_{tot}=d+ \dR$. Denote the spaces of such forms by $\mathcal{A}^p(X,n)$ and $\mathcal{A}^{p,cl}(X,n)$, respectively.
\end{definition}   

More precisely, $\omega \in \mathcal{A}^p(X,n)$ is a $d$-closed element of $DR(X)$ of weight $p$ and cohomological degree $p+n$. Similarly, $\omega \in \mathcal{A}^{p,cl}(X,n)$ is a $(d+\dR)$-closed element  of $DR(X)$ given as a formal series $\omega_p+\omega_{p+1} + \cdots$ such that $\omega_q$ has weight $q$ and degree $n-(q-p)$. 

Also, we define an \bfem{equivalence relation} by considering paths between objects in $\mathcal{A}^{p , cl}(X,n)$, respectively. A \bfem{path} from $\omega^1$ to $\omega^2$ in $\mathcal{A}^p(X,n)$ is given by an element $h$ of weight $ p $ and degree $ (p + n) -1 $ such that $\omega^1-\omega^2=dh.$ Similarly, a path from $\omega^1=\omega^1_p + \omega^1_{p+1} + \cdots $ to $\omega^2=\omega^2_p + \omega^2_{p+1} + \cdots $ is given by a formal series $h=h_p + h_{p+1} + \cdots $ such that we have $\omega^1_q -\omega^2_q= dh_q + \dR h_{q-1}.$ See \cite{Safronov2019}.

\begin{definition}
    An \bfem{$n$-shifted symplectic structure $\omega$ on $X$} is a closed $2$-form of degree $n$ on $X$, such that the induced morphism $\omega^{\flat} : \mathbb{T}_X \rightarrow \mathbb{L}_X [n]$ is an equivalence (the \emph{non-degeneracy condition}). Denote the space of such by $\mathsf{Symp}(X,n)$.
\end{definition}
\begin{definition} \label{defn_isotropic Lag}
    Let $f: \mathcal{L} \to X$ be a morphism. By an \emph{\textbf{$n$-shifted isotropic structure}} on $f$ (relative to $ \omega_{X} $), we mean a path $h_{\mathcal{L}}$ from $0$ to $f^*(\omega_{X})$ in  $\mathcal{A}^{2, cl} (\mathcal{L}, n)$.
    
    An \bfem{$n$-shifted Lagrangian structure} on  $ f $  is defined to be an $n$-shifted isotropic structure such that the sequence $\mathbb{T}_{\mathcal{L}}\rightarrow f^*(\mathbb{T}_{{X}}) \rightarrow \mathbb{L}_{\mathcal{L}}[n]$ is a homotopy fiber sequence\footnote{Equivalently, we can require the  map $\chi_{h_{\mathcal{L}}}:     \mathbb{T}_{\mathcal{L}/X} \rightarrow \mathbb{L}_{\mathcal{L}}[n-1]$ to be an equivalence.}.  In that case, we  say that $\mathcal{L}$ is \bfem{Lagrangian in $ ({X}, \omega_{X}).$}
\end{definition} 

\paragraph{Some results in derived symplectic geometry} There are many interesting literature results in various flavors, see \cite{Anel,Toen},  but we will mention only a few. 
\begin{itemize}
		
	\item PTVV's original paper provides some existence results for shifted symplectic structures, such as derived symplectic structures on fiber products of derived stacks and that on mapping stacks \cite[Theorems 2.9 \& 2.5]{PTVV}. 
		For instance, \cite[Thm. 2.9]{PTVV} states that given the diagram of two Lagrangians in an $n$-symplectic derived Artin stack $(F,\omega)$
	\begin{equation}
		\begin{tikzcd}
			& Y \arrow[d, "g"] \\
			X \arrow[r, "f"] & {(F,\omega),}    
		\end{tikzcd}
	\end{equation}the derived Artin stack $X\times_{f,F,g} Y$ carries an $(n-1)$-shifted symplectic structure. 

\item When $X$ is a smooth stack and $f$ is a regular function on $X$, with the differential map $df: X \rightarrow T^*X$, its (derived) critical locus $\mathrm{Crit}(f)$, defined as the fiber product $X\times_{df,T^*X,0} X$ of the zero section with the graph of $df$ inside the ($0$-symplectic) cotangent stack  $T^*X$, has a canonical $(-1)$-shifted symplectic structure \cite{PTVV}.
\item Calaque in \cite{Calaque2019} showed that given a derived Artin $\mathbb{K}$-stack $X$, its shifted cotangent stack $T^*[n] X$ carries a canonical $n$-shifted symplectic
structure.
\item  \cite[Theorem 2.2]{Calaque2019}  proves that the zero section morphism $\iota_X: X \rightarrow T^*[n]X $ has a shifted Lagrangian structure\footnote{A similar result appears in classical symplectic geometry.}.
\item \cite{CPTVV} provides further developments for derived geometry, leading to the formulation of \emph{shifted Poisson structures} on derived Artin stacks. 
\item Safronov \cite{Safronov2023} introduced the notion of \emph{shifted geometric quantization} in the setting of shifted symplectic structures, with a list of inspiring constructions. 

\end{itemize}
\section{Relative derived calculus}
\subsection{Relative shifted symplectic structures and fibrations} \label{sec:Relative structure}

Concepts from PTVV's symplectic geometry \cite{PTVV} can be extended to the
relative setting. Given a morphism $\pi: X \rightarrow S$ of derived Artin stacks, we can define the \bfem{relative de Rham algebra $DR(X/S)$} as a graded commutative dg algebra \begin{equation}
    DR(X/S)\simeq \Gamma(\mathrm{Sym}_{\mathcal{O}_X}(\mathbb{L}_{X/S}[-1])),
\end{equation} where $ \mathbb{L}_{X/S} $ is the \emph{relative cotangent complex} defined via the cofiber sequence $$\pi^*\mathbb{L}_S \rightarrow \mathbb{L}_X \rightarrow \mathbb{L}_{X/S}.$$

When $\pi$ is a map of derived affine schemes $\spec A \rightarrow \spec B$ coming from a cofibration $B\rightarrow A$ of cofibrant cdgas, we have \begin{equation}
DR(X/S)=DR(A/B) \simeq \mathrm{Sym}_A (\Omega^1_{A/B} [-1]).
\end{equation}

\begin{definition} From \cite{CPTVV} (also see \cite{Safronov2023}) we have the following relative notions:

\begin{itemize}
    \item[(i)] 
    A \bfem{relative $p$-form of degree $n$ on $\pi: X \rightarrow S$} is a $d$-closed element of $DR(X/S)$ of weight $p$ and cohomological degree $p+n$.  Denote the space of such forms by $\mathcal{A}^p(X/S,n)$. 

    \item[(ii)] 
    A \bfem{closed relative $p$-form of degree $n$ on $\pi: X \rightarrow S$} is a $(d+\dR)$-closed element $\omega_p+\omega_{p+1} + \cdots$ of $DR(X/S)$ such that $\omega_q$ is a $q$-form of degree $n-(q-p)$.  Denote the space of such forms by $\mathcal{A}^{p,cl}(X/S,n)$.
\end{itemize}   
\end{definition}
From the above definitions, we have:  

\begin{remark} Let $\pi: X \rightarrow S$ be a morphism of derived Artin $\K$-stacks. Then we have:
    \begin{enumerate}
        \item  There exists a null-homotopic sequence $DR(S) \xrightarrow{\pi^*} DR(X) \xrightarrow{\cdot/S} DR(X/S)$ such that the morphism $DR(X) \rightarrow DR(X/S)$ factors as $$DR(X) \rightarrow DR(X \times S /S) \rightarrow DR(X/S),$$ where the first map allows us to see forms on $X$ as relative forms on the projection map $X \times S \rightarrow S;$ and the second one is the pullback along $(id \times \pi): X \rightarrow X \times S. $ See \cite{Calaque2019}.
        
        \item If $\pi$ is locally of finite presentation, we can define $\mathbb{T}_{X/S}= \mathbb{L}_{X/S}^{\vee}$ as the dual (recall the fiber sequence $\mathbb{T}_{X/S} \rightarrow \mathbb{T}_X \rightarrow \pi^* \mathbb{T}_S$). Then a relative 2-form $\omega$ of degree $n$ induces a morphism 
        \begin{equation} \label{eqn:relative symplectic form}
            \omega^{\sharp}: \mathbb{T}_{X/S} \rightarrow \mathbb{L}_{X/S} [n].
         \end{equation}   
        \end{enumerate}
\end{remark}

The above observations leads to the following definition:

\begin{definition}\label{defn_relative symplectic strc}
        A \bfem{relative $n$-shifted symplectic structure $\omega$ on $\pi: X \rightarrow S$} is a closed relative $2$-form of degree $n$ on $\pi$ such that the induced map $\omega^{\flat}: \mathbb{T}_{X/S} \rightarrow \mathbb{L}_{X/S} [n]$ is an equivalence.
\end{definition}

\begin{remark}
    Assuming all stacks are $\K$-stacks, the relative objects on the morphism $X \rightarrow \mathrm{pt}$, with target as the point $\mathrm{pt}$, will recover the ordinary (absolute) shifted geometric structures defined previously.
\end{remark}

Next we want to introduce a notion in DSG which will replaces the notion of a symplectic fibration in smooth theory. To this end, let $\pi: X \rightarrow S$ be a morphism of derived Artin stacks. Define the \bfem{fiber $ X_s $ over $s\in S$} by 
\begin{equation}
    \begin{tikzcd}
        X_s:=X \times_{S} \{*\} \arrow[r] \arrow[d] & \{*\} \arrow[d, "s"] \\
        X \arrow[r,"\pi"]                                 & S,             
    \end{tikzcd}
\end{equation}where we denote by $\iota_s$ the inclusion of the fiber $X_s \hookrightarrow X$. Note that if $\omega$ is a relative $n$-shifted symplectic structure on $\pi$, then, over $s\in S$, the fiber $X_s$ has the induced $n$-shifted symplectic structure, written $\omega_s$. Thus, we define:

\begin{definition} \label{defn_shifted symp fibration}
    An \bfem{$n$-shifted symplectic fibration} is a morphism  $\pi: X \rightarrow S$ of derived Artin $\K$-stacks which admits a relative $n$-shifted symplectic structure $\omega$ in the sense of Definition \ref{defn_relative symplectic strc}. Alternatively, a morphism  $\pi: X \rightarrow S$ is said to have an \bfem{$n$-shifted symplectic fibration structure} if it has a relative $n$-shifted symplectic structure.
\end{definition}

As we will see in the proof of Theorem \ref{thm:2} below and by the following result from DSG literature, this notion of $n$-shifted symplectic fibration is a convenient replacement of smooth ones.

\begin{proposition} \label{prop: way of inducing symp fib structure} \emph{\cite[Corollary 2.3.8]{Park2025}, \cite[Proposition 1.10] {Safronov2023}.}
    Let $Y$ be an $n$-shifted symplectic derived Artin $\K$-stack. Given a Lagrangian $f:X\rightarrow Y$ and a Lagrangian fibration $p:Y\rightarrow S$, the composition $p\circ f: X \rightarrow S$ has an $(n-1)$-shifted symplectic fibration structure.
\end{proposition}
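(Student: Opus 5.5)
We need to prove Proposition \ref{prop: way of inducing symp fib structure}: for a Lagrangian $f:X\to Y$ in an $n$-shifted symplectic $(Y,\omega_Y)$ and a Lagrangian fibration $p:Y\to S$, the composite $p\circ f$ carries a relative $(n-1)$-shifted symplectic structure. The plan is a relative version of the PTVV Lagrangian intersection argument. The Lagrangian fibration $p$ gives a path $h_Y$ from $0$ to the image of $\omega_Y$ in $\mathcal{A}^{2,cl}(Y/S,n)$, via the map $DR(Y)\to DR(Y/S)$. The Lagrangian $f$ gives a path $h_X$ from $0$ to $f^*\omega_Y$ in $\mathcal{A}^{2,cl}(X,n)$. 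We push $h_X$ to $DR(X/S)$ and pull $h_Y$ back along $f$ to $DR(X/S)$, using the map $DR(Y/S)\to DR(X/S)$ induced by $f$ over $S$. Both then become paths from $0$ to the same form $(f^*\omega_Y)|_{X/S}$. Concatenating one with the inverse of the other gives a loop at $0$ in $\mathcal{A}^{2,cl}(X/S,n)$, which is the same as an element $\omega\in\mathcal{A}^{2,cl}(X/S,n-1)$. This $\omega$ is the candidate relative symplectic form.

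The main work is non-degeneracy: we must show $\omega^\flat:\mathbb{T}_{X/S}\to\mathbb{L}_{X/S}[n-1]$ is an equivalence.

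First, we write down the relevant fiber sequences. The tangent complexes fit into the sequence $\mathbb{T}_{X/S}\to\mathbb{T}_X\to f^*p^*\mathbb{T}_S$. The Lagrangian condition for $f$ gives $\mathbb{T}_{X/Y}\simeq\mathbb{L}_X[n-1]$. The Lagrangian fibration condition identifies $\mathbb{T}_{Y/S}\simeq p^*\mathbb{L}_S[n]$, with the isomorphism induced by $\omega_Y^\flat$ and $h_Y$.

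Second, we apply the transitivity triangle of Theorem \ref{thm:hag_cotangent_properties} to $X\to Y\to S$ in tangent form. This gives
\[\mathbb{T}_{X/Y}\to\mathbb{T}_{X/S}\to f^*\mathbb{T}_{Y/S}.\]
Substituting the two identifications, it becomes a sequence $\mathbb{L}_X[n-1]\to\mathbb{T}_{X/S}\to f^*p^*\mathbb{L}_S[n]$. Dually, shifting the cotangent triangle $f^*p^*\mathbb{L}_S\to\mathbb{L}_X\to\mathbb{L}_{X/S}$ by $n-1$ exhibits $\mathbb{L}_{X/S}[n-1]$ as an extension of the same two pieces:
\[\mathbb{L}_X[n-1]\to\mathbb{L}_{X/S}[n-1]\to f^*p^*\mathbb{L}_S[n].\]

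Third, we check that $\omega^\flat$ is compatible with these two triangles, so that it induces the identity (up to sign) on the outer terms. The five lemma, or a $3\times3$ argument in the stable category $QCoh(X)$, then finishes the proof.

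The compatibility in the third step is the main obstacle. It amounts to a homotopy-coherent diagram chase: the map $\omega^\flat$ constructed from the concatenated paths must restrict to $\chi_{h_X}$ on $\mathbb{T}_{X/Y}$ and project to $f^*\chi_{h_Y}$ on $f^*\mathbb{T}_{Y/S}$. I would handle this exactly as in the proof of \cite[Thm. 2.9]{PTVV}, or Calaque's treatment of Lagrangian correspondences, by working smooth-locally. Choose an atlas with standard form cdgas and submersions $B\to C\to A$ modelling $S,Y,X$, so that all cotangent complexes are Kähler modules and all paths are explicit forms. Then verify the commutativity of the squares directly on generators. Finally, descend using Theorem \ref{thm:HAG_perfect_properties}, parts (\ref{item:perfect_conservative}) and (\ref{item:perfect_pointwise}): being an equivalence of perfect complexes can be checked smooth-locally, or on geometric points.
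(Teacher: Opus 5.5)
Your argument is correct and matches what the cited sources do (\cite[Proposition 1.10]{Safronov2023}, \cite[Corollary 2.3.8]{Park2025}), but it is not the route the paper's own proof takes. The paper uses only a fiberwise consequence of the Lagrangian fibration: each $Y_s\hookrightarrow Y$ is Lagrangian. It applies the PTVV Lagrangian intersection theorem to $X\times_Y Y_s$ to make each geometric fiber $(n-1)$-shifted symplectic. It then admits this does not give a global relative form, and defers the existence of $\omega_{rel}\in\mathcal{A}^{2,cl}(X/S,n-1)$ to Safronov. You use the global definition instead: a nullhomotopy $h_Y$ in $\mathcal{A}^{2,cl}(Y/S,n)$ with $\mathbb{T}_{Y/S}\simeq p^*\mathbb{L}_S[n]$. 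This lets you build the relative form directly as the loop from $h_X|_{X/S}$ and $f^*h_Y$. You then prove non-degeneracy globally in $QCoh(X)$ by comparing the triangles with outer terms $\mathbb{L}_X[n-1]$ and $f^*p^*\mathbb{L}_S[n]$. So you supply exactly the step the paper outsources. Once your loop exists, the paper's fiberwise picture gives another non-degeneracy argument. Show that $\iota_s^*$ of the loop is the PTVV intersection form on $X_s$ (base change of $h_Y$ to $Y_s$). Then use $\iota_s^*\mathbb{L}_{X/S}\simeq\mathbb{L}_{X_s}$ and pointwise conservativity, as in the proof of Theorem~\ref{thm:2}. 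That trades your diagram chase for a base-change identification plus PTVV's Theorem 2.9, whose proof contains the same chase.

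One caution about your third step. Reducing to standard form cdgas with submersions $B\to C\to A$, where ``all cotangent complexes are K\"ahler modules'', works only for derived schemes (or Deligne--Mumford stacks via \'etale atlases). For a smooth atlas $u:U\to X$ of an Artin stack, $u^*\mathbb{L}_X$ is the fiber of $\mathbb{L}_U\to\mathbb{L}_{U/X}$, not $\mathbb{L}_U$. Shifted symplectic forms also pull back to degenerate forms on $U$. This matters for stacks like $BG$ or $X/G$ from the paper's examples. You need two compatibilities: $\omega^\flat$ restricts to $\chi_{h_X}$ on $\mathbb{T}_{X/Y}$, and projects to $f^*\chi_{h_Y}$. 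Both are formal consequences of how the loop is assembled, naturality of $(-)^\flat$ under pullback and relativization, and the definition of $\chi_h$. Establish them directly in $QCoh(X)$, with no coordinates. The two-out-of-three property for maps of cofiber sequences then finishes the proof, and no descent step is needed.
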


\pf Let $s\in S$. As $p:Y\rightarrow S$ has a Lagrangian fibration structure, each fiber inclusion $Y_s \hookrightarrow Y$ carries a Lagrangian structure, and hence we get the diagram 
\[\begin{tikzcd}
X \times_{Y} Y_s \arrow[r, dashed] \arrow[d]              & Y_s \arrow[d, "i-Lag"'] \arrow[r] & \{*\} \arrow[d, "s"] \\
X \arrow[r, "f-Lag"] \arrow[rr, "p\circ f", bend right] & Y \arrow[r, "p"]                  & S                   
\end{tikzcd}\] Applying PTVV's Lagrangian intersection theorem to the left-hand square, we conclude that the geometric fiber $X \times_{Y} Y_s$ has an $(n-1)$-shifted symplectic structure. However, showing that geometric fibers are symplectic is not sufficient to construct a global relative form. The existence of the globally defined relative closed 2-form $\omega_{rel} \in \mathcal{A}^{2,cl}(X/S, n-1)$ governing the $(n-1)$-shifted symplectic fibration structure follows directly from the construction of isotropic structures globally over the base $S$, as rigorously detailed in \cite[Prop. 1.10]{Safronov2023}. The relative form so constructed restricts exactly to the canonical symplectic structures on the fibers. Thus, the composition $p\circ f$ has an $(n-1)$-shifted symplectic fibration structure.

\epf

Here are some examples as applications of Proposition \ref{prop: way of inducing symp fib structure}:

\begin{example} \label{example:cotangent stack}
    Let $X$ be a derived Artin $\K$-stack locally of finite presentation and $\pi_X: T^*X\rightarrow X$ the natural projection. It has been shown in \cite{Calaque2019} that the cotangent stack $T^*X$ has a natural Liouville 1-form $\lambda \in \mathcal{A}^1(T^*X, 0)$ such that 0-shifted symplectic structure on $T^*X$ is given by $\omega:=d_{dR}\lambda$ and that the map $\pi_X$ carries a natural structure of an 0-shifted Lagrangian fibration meaning that, for any $x\in X$, the inclusion of the fiber $\iota_x:X_x\longrightarrow T^*X$ has a 0-shifted Lagrangian structure. Therefore, we have the diagram
\[
    \begin{tikzcd}
X_x \arrow[r, "\iota_x",  hook] \arrow[rd, "{\pi_X\circ\iota_x}"', dashed] & T^*X \arrow[d, "\pi_X"] \\
       & X.                      
\end{tikzcd}
    \] Here, the vertical map is an 0-shifted Lagrangian fibration, and the horizontal one has a  $0$-shifted Lagrangian structure. By  Proposition \ref{prop: way of inducing symp fib structure}, the composition $ \pi_X\circ\iota_x: X_x \rightarrow X$  carries a $(-1)$-shifted symplectic fibration structure.
    
    Note that this example can be extended verbatim to the case of $n$-shifted cotangent $\K$-stacks $T^*[n]X$, implying that the composition $X_x \hookrightarrow T^*[n]X \rightarrow X$ will  have an $(n-1)$-shifted symplectic fibration structure.
\end{example}

\begin{example} \label{example:β-twisted cotangent stack}
        Recall that  if $ X $ is a derived Artin $\K$-stack locally of finite presentation equipped with a closed 1-form $\beta $ of degree $ (n + 1) $, we define the \emph{\textbf{$ n $-shifted $\beta$-twisted cotangent $\K$-stack of $ X $}} to be the fiber product
\begin{equation}
    \begin{tikzcd}
{T^*_{\beta} [n] X} \arrow[d] \arrow[r] & X \arrow[d, "\Gamma_{0}"] \\
X \arrow[r, "\Gamma_{\beta}"]           & {T^*[n+1]X}.      \end{tikzcd}
\end{equation}Here, $\Gamma_{\beta},\Gamma_{0}: X \rightarrow T^*[n+1]X$ are the graphs of $\beta$ and $0$-section, respectively. Note that from \cite[Corollary 2.4]{Calaque2019}, both morphisms $ \Gamma_{\beta},\Gamma_{0} $ have $(n+1)$-shifted Lagrangian structures, and hence the resulting fiber product $ T^*_{\beta} [n] X $ is an $n$-shifted symplectic $\K$-stack. Moreover, \cite[Proposition 1.21]{Safronov2023} shows that the projection $T^*_{\beta} [n] X \rightarrow X$ carries a natural structure of an $n$-shifted Lagrangian fibration.
Then, from Proposition \ref{prop: way of inducing symp fib structure},  the composition $X_x \hookrightarrow T^*_{\beta}[n]X \rightarrow X$   admits an $(n-1)$-shifted symplectic fibration structure.

In particular, assume that $X$ is a smooth $\K$-scheme and $f : X \rightarrow A^1$ a regular function. As the 1-form $d_{dR}f$ (of $\deg$ 0) is closed,  we get $T^*_{d_{dR}f}[-1]X$, the $(-1)$-shifted twisted cotangent bundle, also called the \bfem{derived critical locus} of $f.$ Then the composition $X_x \rightarrow T^*_{d_{dR}f}[-1]X \rightarrow X $ is a (-2)-shifted symplectic fibration.
\end{example}

\begin{example} \label{example:conormal stack}
    Given $f: X \rightarrow S$ a morphism of derived Artin $\K$-stacks locally of finite presentation, we define the \bfem{$n$-shifted conormal $\K$-stack} $N^*[n] f \longrightarrow X$ to be the total space of the shifted relative cotangent complex $\mathbb{L}_{X/S}[n-1]$. Then the natural fiber  sequence $f^*\mathbb{L}_S \rightarrow \mathbb{L}_X \rightarrow \mathbb{L}_{X/S}$ gives rise to a morphism on the level of total spaces \cite{Safronov2019} \[N^*[n] f \longrightarrow T^*[n]S. \] Calaque in \cite{Calaque2019} shows that the morphism $N^*[n] f \longrightarrow T^*[n]S$ carries an $n$-shifted Lagrangian structure, giving the  diagram   
     \[
    \begin{tikzcd}
N^*[n]f \arrow[r] \arrow[rd,  dashed] & T^*[n]S \arrow[d, "\pi_S"] \\
       & S                    
\end{tikzcd}
    \]where the vertical map is an $n$-shifted Lagrangian fibration. As for previous examples, Proposition \ref{prop: way of inducing symp fib structure} implies that  the composition $N^*[n]f   \longrightarrow  S$ carries the structure of an $(n-1)$-shifted symplectic fibration.
    Note that the result can also be extended verbatim to the morphisms $N_{\beta}^*[n] f \longrightarrow T_{\beta}^*[n]S$ of twisted $\K$-stacks. See \cite[Definition 1.22] {Safronov2023}.
\end{example}

\begin{example} \label{ex:mapping_stack} 
(See also \cite[Proposition 2.3.4]{Park2025}) Under certain conditions, one can lift Lagrangian and Lagrangian fibration structures on mapping stacks \cite{Calaque2015}. Let $Y$ be an $n$-shifted symplectic derived Artin $\K$-stack. Suppose we are given a Lagrangian $f:L\rightarrow Y$ and a Lagrangian fibration $p:Y\rightarrow S$. If $X$ is a smooth projective Calabi-Yau $m$-fold, then we get the induced diagram \[
    \begin{tikzcd}
\mathsf{Map}(X,L) \arrow[r, "f \,\circ \,-"] \arrow[rd,  dashed] & \mathsf{Map}(X,Y) \arrow[d, "p \,\circ\,- "] \\
       & \mathsf{Map}(X,S)                     
\end{tikzcd}
   \]where the vertical map is an $(n-m)$-shifted Lagrangian fibration; the horizontal one is $(n-m)$-shifted Lagrangian. From Proposition \ref{prop: way of inducing symp fib structure}, the composition $\mathsf{Map}(X,L) \longrightarrow \mathsf{Map}(X,S)$ is then an $(n-m-1)$-shifted symplectic fibration.
\end{example}

\begin{example} \label{ex:moment map}
    Denote by $ BG  $ the \emph{classifying stack} of an affine algebraic Lie group $ G $ equipped with a nondegenerate invariant symmetric bilinear pairing $ \langle -, - \rangle $ on its Lie algebra $\mathfrak{g}$. More precisely, it is defined as the \emph{quotient stack}
\begin{equation}
    BG:= */G= \mathrm{colim} \bigg(*\mathrel{\substack{\textstyle\leftarrow\\[-0.1ex]
            \textstyle\leftarrow \\[-0.1ex]}}  G
    \mathrel{\substack{\textstyle\leftarrow\\[-0.1ex]
            \textstyle\leftarrow \\[-0.1ex]
            \textstyle\leftarrow\\[-0.3ex]}} G \times G
    \mathrel{\substack{\textstyle\leftarrow\\[-0.1ex]
            \textstyle\leftarrow \\[-0.1ex]
            \textstyle\leftarrow \\[-0.1ex]
            \textstyle\leftarrow \\[-0.3ex]}} 
    \cdot\cdot\cdot \bigg),
\end{equation}  where the maps are given by the action and projection. Note that $ BG $ carries a canonical 2-shifted symplectic structure $\omega_{BG}$ \cite{PTVV}. Let $X$ be a $2$-shifted symplectic derived Artin stack equipped with a smooth action of $G$, and let $\mu: X \rightarrow \mathfrak{g}^*[2]$ be a \emph{$2$-shifted moment map}, meaning that the map $\mu$ is $G$-equivariant and there is a $3$-shifted Lagrangian structure on the map \[ \mu/G: X/G \longrightarrow \mathfrak{g}^*[2]/G.\] Here $ \mathfrak{g}^*[2]/G \simeq T^*[3]BG$ as $3$-shifted symplectic stacks \cite{AnelCalaque}. We then get the diagram     \[
    \begin{tikzcd}
X/G \arrow[r] \arrow[rd,  dashed] & \mathfrak{g}^*[2]/G \simeq T^*[3]BG \arrow[d, "\pi_{BG}"] \\
       & (BG, \omega_{BG})                    
\end{tikzcd}
    \]where the vertical map is a $3$-shifted Lagrangian fibration. As for previous examples, Proposition \ref{prop: way of inducing symp fib structure} implies that  the composition $X/G \longrightarrow (BG, \omega_{BG})$ carries a $2$-shifted symplectic fibration structure.
\end{example}

\subsection{Induced symplectic fibrations}\label{sec:Ind_symp_fib}

We will now establish the first main result, Theorem \ref{thm:2}, of this paper: Given a morphism $\pi: X \rightarrow S$ of derived Artin $\K$-stacks, if $X$ admits an $n$-shifted symplectic structure $\omega_X$ whose restriction to each geometric fiber $X_s$ defines an $n$-shifted symplectic structure on $X_s$, then $\pi$ admits an $n$-shifted symplectic fibration structure compatible with $\omega_X$. Before proving Theorem \ref{thm:2}, let us first introduce the appropriate \bfem{compatibility} notion for shifted symplectic fibrations in the DSG framework:

\begin{definition}[Compatibility with a shifted symplectic fibration] \label{def:compatibility}
Let $\pi: X \rightarrow S$ be a morphism of derived $\K$-schemes such that $X$ admits an $n$-shifted symplectic structure $\omega_X$. Denote by $\iota_s: X_s \hookrightarrow X$ the inclusion of the fiber over $s\in S$. We say that $\omega_X$ is \bfem{compatible with} $\pi$ if $\iota_s^*\omega_X$ is homotopic to an $n$-shifted symplectic structure on $X_s$ for each $s\in S$.
    
\end{definition}

\begin{proof}[\textbf{Proof of Theorem \ref{thm:2}}]
We are given a morphism of derived Artin $\K$-stacks $\pi: X \rightarrow S$ and an absolute $n$-shifted symplectic structure $\omega_X \in \mathsf{Symp}(X, n)$. 

By the definition of relative forms \cite{PTVV}, the canonical projection $DR(X) \xrightarrow{\cdot/S} DR(X/S)$ maps the absolute $n$-shifted closed 2-form $\omega_X \in \mathcal{A}^{2,cl}(X, n)$ to a relative $n$-shifted closed 2-form $$\omega_{rel}:=\omega_X/S \in \mathcal{A}^{2,cl}(X/S, n).$$ 

To show that $\omega_{rel}$ defines an $n$-shifted symplectic fibration structure, we must prove that the induced morphism of perfect complexes 
\begin{equation}
    \omega_{rel}^\flat : \mathbb{T}_{X/S} \longrightarrow \mathbb{L}_{X/S}[n] 
\end{equation}
is an equivalence in the stable $\infty$-category $QCoh(X)$.

The relative morphism $\omega_{rel}^\flat$ naturally factors as the composition
\begin{equation}
    \mathbb{T}_{X/S} \xrightarrow{i} \mathbb{T}_X \xrightarrow{\omega_X^\flat} \mathbb{L}_X[n] \xrightarrow{i^\vee[n]} \mathbb{L}_{X/S}[n]
\end{equation}
where $i: \mathbb{T}_{X/S} \to \mathbb{T}_X$ is the canonical inclusion obtained by dualizing the relative cotangent exact triangle (Theorem \ref{thm:hag_cotangent_properties}-(\ref{item:cotangent_transitivity})).

For any $\K$-point $s \in S(\K)$, let $\iota_s: X_s \hookrightarrow X$ denote the inclusion of the geometric fiber. By the base change property for relative cotangent complexes (Theorem \ref{thm:hag_cotangent_properties}-(\ref{item:cotangent_basechange})), the derived pullback of the relative cotangent complex to the fiber canonically identifies with the cotangent complex of the fiber: $\iota_s^*\mathbb{L}_{X/S} \simeq \mathbb{L}_{X_s}$. Consequently, the pullback of the relative morphism $\omega_{rel}^\flat$ to the fiber $X_s$ is exactly the morphism induced by the restricted form:\begin{equation}
    \iota_s^*(\omega_{rel}^\flat) \simeq (\iota_s^*\omega_X)^\flat : \mathbb{T}_{X_s} \longrightarrow \mathbb{L}_{X_s}[n].
\end{equation}

By hypothesis, the restriction $\iota_s^*\omega_X$ is an $n$-shifted symplectic structure on $X_s$. This means that for every geometric point $s \in S(\K)$, the induced map $(\iota_s^*\omega_X)^\flat$ is an equivalence. Therefore, the relative morphism $\omega_{rel}^\flat$ is an equivalence on all geometric fibers of $\pi$.

Since $X$ and $S$ are derived Artin stacks locally of finite presentation, $\pi$ is locally of finite presentation, so its relative cotangent complex $\mathbb{L}_{X/S}$ is almost perfect (coherent), see Theorem \ref{thm:hag_cotangent_properties}-(\ref{item:cotangent_fp}). Since the restriction $\iota_s^*\omega_X$ is an $n$-shifted symplectic structure on $X_s$, it follows that the cotangent complex of each geometric fiber $\mathbb{L}_{X_s} \simeq \iota_s^*\mathbb{L}_{X/S}$ is perfect. Also an almost perfect complex whose derived pullback to all geometric points is perfect is globally a perfect complex. Thus, $\mathbb{L}_{X/S}$ (and its dual $\mathbb{T}_{X/S}$) are perfect complexes in $QCoh(X)$. 

Because perfect modules form a stack (Theorem \ref{thm:HAG_perfect_properties}-(\ref{item:perfect_stack})) and base change functors along covering families are conservative (Theorem \ref{thm:HAG_perfect_properties}-(\ref{item:perfect_conservative})), checking that a morphism of perfect complexes is an equivalence can be done locally. By standard properties of perfect complexes (checking that a morphism of perfect complexes is an equivalence reduces to checking that its derived pullback to every geometric point is an equivalence, see Theorem \ref{thm:HAG_perfect_properties}-(\ref{item:perfect_pointwise})), this reduces to checking that its pullback to every geometric point is an equivalence. Because we have established that $\omega_{rel}^\flat$ is an equivalence on all geometric fibers, it is an equivalence on all geometric points, and thus a global equivalence in $QCoh(X)$ as desired.

We conclude that $\omega_{rel}$ is a relative $n$-shifted symplectic structure on $\pi$. By definition, this implies $\pi$ is an $n$-shifted symplectic fibration. The compatibility of $\pi$ with $\omega_X$ follows trivially, as $\omega_{rel}$ is the direct image of $\omega_X$ under the canonical relative projection.

\end{proof}

\section{Absolute symplectic structures via symplectic fibrations} \label{sec:results}

In this section, we state the precise version of Theorem \ref{thm:1} and prove it by providing an analogous construction used in the aforementioned theorem of Thurston:

\begin{theorem}[Derived Thurston Theorem for Artin Stacks] \label{thm: proof_thurston in dsg}
    Let $X$ and $S$ be derived Artin $\K$-stacks (both locally of finite presentation and quasi-compact) over a field $\K$ of characteristic zero. Let $\omega \in \mathcal{A}^{2,cl}(X/S, n)$ be an $n$-shifted symplectic fibration structure on $\pi: X \rightarrow (S,\sigma)$, with $\sigma\in \mathsf{Symp}(S,n)$. 
    
    Assume there is a global absolute $n$-shifted 2-form $\Omega \in \mathcal{A}^{2,cl}(X,n)$ on $X$ with $\iota_s^*\Omega \sim \omega_s$ in $\mathcal{A}^{2,cl}(X_s,n)$, where $\iota_s: X_s \hookrightarrow X$ is the inclusion of the geometric fiber over $s\in S(\K)$ and $\omega_s\sim\iota_s^*\omega$. 
    
    Then there exists an absolute $n$-shifted symplectic structure $\omega_X \in \mathsf{Symp}(U,n)$ defined on a dense Zariski-open derived substack $U \subseteq X$ such that:
    \begin{enumerate}
        \item[(i)] $\omega_X \sim \Omega + c \cdot \pi^* \sigma$ \, in $\mathcal{A}^{2,cl}(U,n)$ for a generic constant $c \in \K$, and
        \item[(ii)] $\iota_s^* \omega_X \sim \omega_s$ \, in $\mathcal{A}^{2,cl}(U_s,n)$ for each $s\in S(\K)$, making $\omega_X$ compatible with $\pi$.
   \end{enumerate}
   Furthermore, if $X$ is proper over $\K$, then $U = X$, and the structure extends globally.
\end{theorem}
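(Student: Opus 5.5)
We follow Thurston's classical strategy: the closed form $\Omega_c := \Omega + c\cdot\pi^*\sigma \in \mathcal{A}^{2,cl}(X,n)$ is a candidate for every $c\in\K$, and the task is to show that its non-degeneracy locus is large for generic $c$. Closedness is automatic, since $\Omega$ and $\pi^*\sigma$ are both closed $2$-forms of degree $n$. Property (ii) is also immediate for every $c$. Indeed, $\pi\circ\iota_s$ factors through the point $\{*\}\xrightarrow{s}S$, and $\mathcal{A}^{2,cl}(\mathrm{pt},n)$ is contractible for a $2$-form, so $\iota_s^*\pi^*\sigma\sim 0$. Hence $\iota_s^*\Omega_c\sim\iota_s^*\Omega\sim\omega_s$ by hypothesis. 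The substance therefore lies in non-degeneracy.

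\textbf{Step 1: a filtered description of $\Omega_c^\flat$.} Consider the cofiber sequences $\mathbb{T}_{X/S}\to\mathbb{T}_X\to\pi^*\mathbb{T}_S$ and $\pi^*\mathbb{L}_S\to\mathbb{L}_X\to\mathbb{L}_{X/S}$. With respect to them, we analyse $\Omega_c^\flat:\mathbb{T}_X\to\mathbb{L}_X[n]$ as a ``block matrix''. The composite $\mathbb{T}_{X/S}\to\mathbb{T}_X\to\mathbb{L}_X[n]\to\mathbb{L}_{X/S}[n]$ is $\Omega^\flat$ composed with the relative projection. Since $(\pi^*\sigma)^\flat$ factors through $\pi^*\mathbb{T}_S\to\pi^*\mathbb{L}_S[n]$, this composite does not depend on $c$. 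The composite $\pi^*\mathbb{T}_S\text{-part}\to\pi^*\mathbb{L}_S[n]$ is $c\,\pi^*\sigma^\flat+(\text{an }\Omega\text{-term})$.

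We argue as in the proof of Theorem \ref{thm:2}, using base change (Theorem \ref{thm:hag_cotangent_properties}-(\ref{item:cotangent_basechange})) and the hypothesis $\iota_s^*\Omega\sim\omega_s=\iota_s^*\omega$. This shows that the diagonal relative block is an equivalence after pulling back to every geometric point. Since $\mathbb{L}_{X/S}$ is perfect there, Theorem \ref{thm:HAG_perfect_properties}-(\ref{item:perfect_pointwise}) makes it a global equivalence $\mathbb{T}_{X/S}\simeq\mathbb{L}_{X/S}[n]$. A standard two-out-of-three argument on the induced map of cofiber sequences then reduces the non-degeneracy of $\Omega_c$ to the invertibility of a Schur-complement type map $\pi^*\mathbb{T}_S\to\pi^*\mathbb{L}_S[n]$ of the form $c\,\pi^*\sigma^\flat+B$, where $B$ is determined by $\Omega$ and is independent of $c$.

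\textbf{Step 2: generic $c$ and the open locus.} Let $\mathcal{E}_c:=\mathrm{cofib}(\Omega_c^\flat)$. This is a perfect complex on $X$, because $\mathbb{L}_X$ is perfect: it is an extension of $\mathbb{L}_{X/S}$ and $\pi^*\mathbb{L}_S$, and $\mathbb{L}_S$ is perfect because $S$ is symplectic. By Theorem \ref{thm:HAG_perfect_properties}-(\ref{item:perfect_vanishing}), the vanishing locus $U_c\subseteq X$ of $\mathcal{E}_c$ is Zariski open. At a geometric point $x$, the Schur complement becomes $c\,\sigma^\flat_{\pi(x)}+B_x$ with $\sigma^\flat_{\pi(x)}$ invertible. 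Its failure to be an equivalence is detected by the vanishing of a determinant, or more precisely a Berezinian or Euler-characteristic-type function in the graded pieces of cohomology. This function is a polynomial in $c$ whose leading term is nonzero, so it vanishes for only finitely many $c$ at each point. Working on a smooth atlas and using quasi-compactness, we conclude that for generic $c$ the locus $U_c$ meets every irreducible component of $X$, and hence is dense. We set $U:=U_c$ and $\omega_X:=\Omega_c|_U$, and (i) holds by construction.

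\textbf{Main obstacle.} The main obstacle is making the ``determinant'' argument honest in the derived setting. Non-degeneracy concerns a map of complexes, not of vector bundles, so we would first try to choose a minimal model at each point in which $\sigma^\flat$ and $B$ are represented by finite-dimensional graded matrices, and then apply the classical fact that $cA+B$ with $A$ invertible is invertible for all but finitely many $c$. Ensuring that these finitely many bad values can be taken uniformly on a dense open set requires the quasi-compactness hypothesis and a constructibility argument.

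\textbf{Proper case.} For the final claim, when $X$ is proper we would argue that the bad set of $c$ is the image of a proper, closed, and finite-over-each-point subset. Its projection to $\mathbb{A}^1_c$ is then a proper closed subset, hence finite, so for $c$ outside this finite set we get $U=X$.
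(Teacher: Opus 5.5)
Your argument for the generic statement is essentially the paper's. You use the same candidate $\Omega+c\cdot\pi^*\sigma$, the same factorization through the point for (ii), and the same pointwise-to-global upgrade for the vertical block. Your two-out-of-three/Schur-complement step is the paper's splitting via the retraction $\rho=(\Omega_{rel}^\flat)^{-1}\circ i^\vee[n]\circ\Omega^\flat$ in different clothing: the paper's $C=\eta^\vee[n]\circ\Omega^\flat\circ\eta$ is your $B$.

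Two small corrections there:
\begin{enumerate}
\item The pointwise criterion is that $\det H^k(c\,\sigma^\flat_{\pi(x)}+B_x)\neq 0$ for each degree $k$ separately. A Berezinian or Euler-characteristic-type alternating combination does not detect degree-wise invertibility.
\item Your ``main obstacle'' is not one. Pick one $\K$-point on each of the finitely many irreducible components, and avoid the finitely many bad $c$ at those points. Then the open set $U_c$ meets every component and is therefore dense, with no uniformity or constructibility argument needed. The paper does the same thing using generic points and eigenvalues of $M=(\sigma_S^\flat)^{-1}\circ C$.
\end{enumerate}

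The genuine gap is the proper case. You argue that the bad set $Z\subset X\times\mathbb{A}^1_c$ is closed with finite fibres over $X$, so its image in $\mathbb{A}^1_c$ is closed and hence finite. Properness of $X$ only makes that image closed; it does not stop it from being all of $\mathbb{A}^1_c$. For example, take $X=\mathbb{P}^1$ and $Z=\{(x,c): x=c\}\subset\mathbb{P}^1\times\mathbb{A}^1$. This set is closed, meets each fibre over $X$ in at most one point, and surjects onto $\mathbb{A}^1$.

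What rules this out in the theorem is structure, not topology. The bad values at $x$ are the negatives of the eigenvalues of $H^*(M_x)$ for a single global endomorphism $M$ of $\pi^*\mathbb{T}_S$, and these cannot vary with $x$ on a proper $X$. The paper makes this precise as follows:
\begin{itemize}
\item For proper $X$, $\mathrm{Ext}^0_X(\pi^*\mathbb{T}_S,\pi^*\mathbb{T}_S)$ is a finite-dimensional $\K$-algebra, so $M$ satisfies some $P\in\K[t]$.
\item Every pointwise eigenvalue is therefore a root of $P$.
\item If $P(-c)\neq 0$, write $P(t)=(t+c)Q(t)+P(-c)$. Then $(M+c)\,Q(M)=-P(-c)\cdot\mathrm{id}$, so $c\cdot\mathrm{id}+M$ is globally invertible and $U=X$.
\end{itemize}
Your proper-case step needs an input of this kind, i.e.\ global control of the spectrum of $M$, to go through.
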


\begin{remark}
    It should be noted that when the map $\pi: X \rightarrow (S,\sigma)$ in Theorem \ref{thm: proof_thurston in dsg} is actually a morphism of \textit{derived $\K$-schemes}, the proof can benefit from the construction of a local representative for the morphism $\pi$ via a submersion of standard form cdgas $\spec B \to \spec A$ \cite[Section 3]{Brav}. This approach applies strictly to derived schemes and certain gluing arguments that mimic the original topological proof of Thurston's theorem \cite[Theorem 6.3]{Mcduff}. Note that, in differential algebraic geometry, the conventional \emph{partitions of unity} do not exist. Instead, we must rely on the limit in the $\infty$-category of cdgas to glue local forms by equivalences, see (\ref{defn_DR(X) as inf limit}). 

The following setup, on the other hand, provides a coordinate-free proof for arbitrary derived Artin stacks while preserving the geometric approach. 
\end{remark}

\begin{proof}[\textbf{Proof of Theorem \ref{thm: proof_thurston in dsg}}]
The proof proceeds within the stable $\infty$-category of quasi-coherent sheaves, ${QCoh}(X)$. This approach generalizes Thurston's result to derived Artin stacks without relying on local submersive cdga models for the morphism $\pi$. The proof consists of 4 steps.

\medskip \noindent \textbf{Step 1: Definition of $\omega_X$ and fiber compatibility.} \\
By assumption, we are given a global absolute $n$-shifted 2-form $\Omega \in \mathcal{A}^{2,cl}(X,n)$ such that its restriction to any geometric fiber $X_s$ satisfies $\iota_s^*\Omega \sim \omega_s$ in $\mathcal{A}^{2,cl}(X_s,n)$. We define the candidate absolute $n$-shifted symplectic structure on $X$ directly as:
\begin{equation} \label{eq:omega_candidate}
    \omega_X := \Omega + c \cdot \pi^*\sigma \quad \in \mathcal{A}^{2,cl}(X, n)
\end{equation}
where $c \in \K$ is a scalar constant (to be determined later). As shown in \cite[Definition 1.13]{PTVV}, the space of closed 2-forms $\mathcal{A}^{2,cl}(X, n)$ is equivalent to the Dold-Kan space associated to the weight-2 piece of the shifted negative cyclic complex $NC^w(X/\K)[n-2](2)$. Since this is a strict complex of $\K$-modules, scalar multiplication by $c \in \K$ is a well-defined operation on the underlying complex that canonically preserves closure. Since $\Omega$ and $\pi^*\sigma$ are closed $n$-shifted 2-forms, $\omega_X$ is also a closed $n$-shifted 2-form.

We first verify the compatibility condition $(ii)$: For any $\K$-point $s \in S(\K)$, the geometric fiber $X_s$ is defined by the following Cartesian diagram in the $\infty$-category of derived stacks $dSt_{\K}$:
\begin{equation} \label{cd:fiber_pullback}
\begin{tikzcd}[column sep=large, row sep=large]
    X_s \arrow[r, "\iota_s", hook] \arrow[d, "p_s"'] & X \arrow[d, "\pi"] \\
    \spec\, \K \arrow[r, "s"'] & S.
\end{tikzcd}
\end{equation}
By commutativity, the composition $\pi \circ \iota_s$ factors through the constant map to $s$. Since the shifted cotangent complex of the point $\spec \K$ vanishes over $\K$, the derived pullback of the base form to the fiber evaluates identically to zero in $\mathcal{A}^{2,cl}(X_s, n)$. Thus, $\iota_s^*\pi^*\sigma \simeq 0$, and so evaluating the pullback of $\omega_X$ to the fiber gives
\begin{equation} \label{eq:fiber_eval}
    \iota_s^*\omega_X = \iota_s^*\Omega + c \cdot \iota_s^*\pi^*\sigma \sim \omega_s + 0 = \omega_s.
\end{equation}
This verifies the condition $(ii)$ for any choice of the constant $c$. 

\medskip \noindent \textbf{Step 2: Homological splitting of the tangent complex.} \\
It remains to prove that for a suitable choice of $c$, the form $\omega_X$ is non-degenerate. By \cite[Definition 1.18]{PTVV}, the non-degeneracy requires that the induced morphism of perfect complexes $\omega_X^\flat: \mathbb{T}_X \to \mathbb{L}_X[n]$ is an equivalence in the homotopy category $\mathrm{Ho}({QCoh}(X))$. 

By dualizing the canonical relative cotangent sequence for stacks (Theorem \ref{thm:hag_cotangent_properties}-(\ref{item:cotangent_transitivity})), the morphism $\pi: X \to S$ induces a canonical exact triangle of perfect tangent complexes in $QCoh(X)$:
\begin{equation} \label{eq:tangent_triangle}
    \begin{tikzcd}
        \mathbb{T}_{X/S} \ar[r, "i"] & \mathbb{T}_X \ar[r, "p"] & \pi^*\mathbb{T}_S \ar[r, "+1"] & \mathbb{T}_{X/S}[1].
    \end{tikzcd}
\end{equation}
The absolute form $\Omega \in \mathcal{A}^{2,cl}(X,n)$ induces the morphism $\Omega^\flat: \mathbb{T}_X \to \mathbb{L}_X[n]$. We define its relative projection onto the vertical tangent complex as $\Omega_{rel}^\flat := i^\vee{[n]} \circ \Omega^\flat \circ i : \mathbb{T}_{X/S} \to \mathbb{L}_{X/S}[n]$, factoring as follows.
\begin{equation} \label{cd:theta_projection}
\begin{tikzcd}
    \mathbb{T}_{X/S} \arrow[r, "i"] \arrow[d, "\Omega_{rel}^\flat"'] & \mathbb{T}_X \arrow[d, "\Omega^\flat"] \\
    \mathbb{L}_{X/S}[n] & \mathbb{L}_X[n] \arrow[l, "i^\vee{[n]}"'].
\end{tikzcd}
\end{equation}
By hypothesis, $\iota_s^*\Omega \sim \omega_s$. Since $\omega_s$ is an $n$-shifted symplectic structure on the fiber $X_s$, the induced map $\omega_s^\flat$ is an equivalence. Because the pullback of the relative cotangent complex to the geometric fiber is the cotangent complex of the fiber (i.e., $\iota_s^*\mathbb{L}_{X/S} \simeq \mathbb{L}_{X_s}$), the restriction of $\Omega_{rel}^\flat$ to any geometric fiber $X_s$ over $s \in S(\K)$ is a quasi-isomorphism. 

Since $X$ and $S$ are locally of finite presentation, $\mathbb{T}_{X/S}$ and $\mathbb{L}_{X/S}[n]$ are perfect complexes. Let $K$ be the cone of the morphism $\Omega_{rel}^\flat: \mathbb{T}_{X/S} \to \mathbb{L}_{X/S}[n]$. Because $K$ is perfect, the locus where $K \simeq 0$ (i.e., where $\Omega_{rel}^\flat$ is an equivalence) forms a formal Zariski-open derived substack $V \subseteq X$ (Theorem \ref{thm:HAG_perfect_properties}-(\ref{item:perfect_vanishing})). As shown above, $\Omega_{rel}^\flat$ restricts to a quasi-isomorphism on every geometric fiber $X_s$. Since every geometric point $x \in X(\K)$ belongs to the fiber over $\pi(x) \in S(\K)$, $\Omega_{rel}^\flat$ is an equivalence at every geometric point of $X$. Thus, the Zariski-open substack $V$ contains all geometric points of $X$. For derived Artin stacks over a field $\K$, a Zariski-open substack containing all geometric points is dense and thus the entire stack, hence $V = X$. Consequently, $\Omega_{rel}^\flat$ is a global equivalence in $QCoh(X)$.

Because $\Omega_{rel}^\flat$ is an equivalence, we construct a canonical homological retraction morphism $\rho: \mathbb{T}_X \to \mathbb{T}_{X/S}$ via:
\begin{equation} \label{eq:retraction}
    \rho := (\Omega_{rel}^\flat)^{-1} \circ i^\vee{[n]} \circ \Omega^\flat.
\end{equation}
Composing this retraction with the inclusion $i$ yields:
\begin{equation} \label{eq:retraction_comp}
    \rho \circ i \simeq (\Omega_{rel}^\flat)^{-1} \circ (i^\vee{[n]} \circ \Omega^\flat \circ i) \simeq (\Omega_{rel}^\flat)^{-1} \circ \Omega_{rel}^\flat \simeq \operatorname{id}_{\mathbb{T}_{X/S}}.
\end{equation}
\vspace{1in}

In a stable $\infty$-category, the existence of a left inverse for a morphism in an exact triangle implies that the triangle splits \cite[Theorem 1.1.2.14]{LurieHA}. Let $\eta: \pi^*\mathbb{T}_S \to \mathbb{T}_X$ denote the canonical section of $p$ induced by this splitting, so that $p \circ \eta \simeq \operatorname{id}_{\pi^*\mathbb{T}_S}$ and $\rho \circ \eta \simeq 0$. That is, we get:
\begin{equation} \label{cd:triangle_split}
\begin{tikzcd}[column sep=huge]
    \mathbb{T}_{X/S} \arrow[r, "i", shift left=1ex] & \mathbb{T}_X \arrow[l, "\rho", shift left=1ex, dashed] \arrow[r, "p", shift left=1ex] & \pi^*\mathbb{T}_S \arrow[l, "\eta", shift left=1ex, dashed].
\end{tikzcd}
\end{equation}
Thus, in $\mathrm{Ho}({QCoh}(X))$, we obtain the canonical equivalence:
\begin{align} \label{eq:splitting}
    \mathbb{T}_X &\simeq \mathbb{T}_{X/S} \oplus \pi^*\mathbb{T}_S .
\end{align}

\medskip \noindent \textbf{Step 3: Block-Diagonalization.} \\
Because the homotopy category $\mathrm{Ho}({QCoh}(X))$ is a triangulated (and thus additive) category \cite[Theorem 1.1.2.14]{LurieHA}, morphisms between direct sums can be represented as block matrices. With respect to the splitting (\ref{eq:splitting}), we evaluate the components of $\Omega^\flat: \mathbb{T}_X \to \mathbb{L}_X[n]$. 

The top-left block corresponds to the vertical component, $i^\vee{[n]} \circ \Omega^\flat \circ i = \Omega_{rel}^\flat$. The top-right block describes the cross term mapping $\pi^*\mathbb{T}_S$ to $\mathbb{L}_{X/S}[n]$, given by $i^\vee{[n]} \circ \Omega^\flat \circ \eta$. By the definition of the retraction $\rho$ in (\ref{eq:retraction}), we have the equivalence $i^\vee{[n]} \circ \Omega^\flat \simeq \Omega_{rel}^\flat \circ \rho$. Applying this to the section $\eta$, we find:
\begin{equation} \label{eq:cross_term1}
    i^\vee{[n]} \circ \Omega^\flat \circ \eta \simeq \Omega_{rel}^\flat \circ (\rho \circ \eta) \simeq \Omega_{rel}^\flat \circ 0 \simeq 0.
\end{equation}
The bottom-left block maps $\mathbb{T}_{X/S}$ to $\pi^*\mathbb{L}_S[n]$, given by $\eta^\vee{[n]} \circ \Omega^\flat \circ i$. Because $\Omega$ is an $n$-shifted 2-form, the associated morphism $\Omega^\flat$ satisfies $(\Omega^\flat)^\vee{[n]} \simeq (-1)^{n+1} \Omega^\flat$ in $\mathrm{Ho}({QCoh}(X))$ (cf. \cite[Definition 1.12]{PTVV}). Taking the dual of the top-right block, we obtain:
\begin{equation} \label{eq:cross_term2}
    \eta^\vee{[n]} \circ \Omega^\flat \circ i \simeq (-1)^{n+1} (\eta^\vee{[n]} \circ (\Omega^\flat)^\vee{[n]} \circ i) \simeq (-1)^{n+1} (i^\vee{[n]} \circ \Omega^\flat \circ \eta)^\vee{[n]} \simeq 0.
\end{equation}
The specific retraction $\rho$ geometrically represents the projection onto the vertical tangent space along its $\Omega$-orthogonal complement. This intrinsic orthogonality causes the cross terms to vanish identically. As a result, $\Omega^\flat$ is strictly block-diagonal:
\begin{equation} \label{diag:direct_sum_splitting}
\begin{tikzcd}[column sep=huge, row sep=huge, ampersand replacement=\&]
    \mathbb{T}_X \arrow[r, "\sim"] \arrow[d, "\Omega^\flat"'] \& \mathbb{T}_{X/S} \oplus \pi^*\mathbb{T}_S \arrow[d, "{\begin{pmatrix} \Omega_{rel}^\flat & 0 \\ 0 & C \end{pmatrix}}"] \\
    \mathbb{L}_X[n] \arrow[r, "\sim"'] \& \mathbb{L}_{X/S}[n] \oplus \pi^*\mathbb{L}_S[n]
\end{tikzcd}
\end{equation}
where $C = \eta^\vee{[n]} \circ \Omega^\flat \circ \eta : \pi^*\mathbb{T}_S \to \pi^*\mathbb{L}_S[n]$. 

Denote by $\sigma^\flat$ the morphism $\mathbb{T}_S \to \mathbb{L}_S[n]$ induced by the leading term of $\sigma\in \mathsf{Symp}(S,n)$. The derived pullback operator associated to $\pi^*\sigma^\flat$ then factors completely through the projection to the base as $p^\vee{[n]} \circ \sigma_S^\flat \circ p$, where $\sigma_S^\flat = \pi^*\sigma^\flat$. Because $p \circ i \simeq 0$, this operator vanishes strictly on the relative tangent complex $\mathbb{T}_{X/S}$. Thus, its block matrix representation is completely horizontal:
\begin{equation} \label{eq:sigma_block}
    \pi^*\sigma^\flat \simeq \begin{pmatrix} 0 & 0 \\ 0 & \sigma_S^\flat \end{pmatrix}.
\end{equation}
Because $(S, \sigma)$ is an $n$-shifted symplectic stack, $\sigma^\flat$ is an equivalence \cite[Definition 1.18]{PTVV}, making $\sigma_S^\flat$ an equivalence. Consequently, the total candidate operator $\omega_X^\flat = \Omega^\flat + c \cdot \pi^*\sigma^\flat$ induced from the form $\omega_X$ in   (\ref{eq:omega_candidate}) evaluates to a block-diagonal matrix:
\begin{equation} \label{eq:omega_x_block}
    \omega_X^\flat \simeq \begin{pmatrix} \Omega_{rel}^\flat & 0 \\ 0 & C + c \cdot \sigma_S^\flat \end{pmatrix}.
\end{equation}
\vspace{1in}

In an additive category, a block-diagonal morphism is an isomorphism if and only if both diagonal blocks are isomorphisms. Since the top-left block $\Omega_{rel}^\flat$ is an equivalence (established in Step 2), $\omega_X^\flat$ is an isomorphism in $\mathrm{Ho}({QCoh}(X))$ if and only if the bottom-right horizontal block $C + c \cdot \sigma_S^\flat$ is an equivalence. 

Thus, it remains to show that the bottom-right block is an equivalence, which will be discussed in the next step.

\medskip \noindent \textbf{Step 4: Generic invertibility and properness.} \\
Factoring out the equivalence $\sigma_S^\flat$, the non-degeneracy condition on the horizontal block becomes:
\begin{equation} \label{eq:horizontal_block}
    C + c \cdot \sigma_S^\flat \simeq \sigma_S^\flat \circ \left( c \cdot \mathrm{id}_{\pi^*\mathbb{T}_S} + (\sigma_S^\flat)^{-1} \circ C \right).
\end{equation}
Let $M = (\sigma_S^\flat)^{-1} \circ C$. This $M$ is a global endomorphism of the perfect complex $\pi^*\mathbb{T}_S$.  To ensure non-degeneracy, we must choose $c \in \K$ such that $c \cdot \mathrm{id} + M$ is an isomorphism in $\mathrm{Ho}({QCoh}(X))$. 

Since $X$ is a quasi-compact Artin stack over a field of characteristic zero, its underlying topological space is Noetherian and has finitely many irreducible components. Let $\xi_1, \dots, \xi_m$ denote the generic points of these components, with associated residue fields $K_j = \kappa(\xi_j)$. 

For each generic point $\xi_j$, the pullback $\xi_j^*M$ acts on a perfect complex over $\spec\, K_j$. Passing to cohomology, $\xi_j^*M$ induces an endomorphism on the finite-dimensional graded $K_j$-vector space $H^*(\pi^*\mathbb{T}_S \otimes K_j)$. In order for $c \cdot \mathrm{id} + \xi_j^*M$ to be an equivalence of perfect complexes, it is necessary and sufficient that it induces an isomorphism on cohomology in each degree. This holds if and only if $-c$ is not an eigenvalue of the linear map induced by $\xi_j^*M$ in any cohomology degree. Since the complex is bounded and each degree is finite-dimensional, the collection of all eigenvalues of $\xi_j^*M$ across all degrees forms a strictly finite set $E_j \subset \overline{K_j}$. Because the base field $\K$ is algebraically closed and of characteristic zero, it is infinite. Since each $E_j$ is finite, the intersection $E_j \cap \K$ is finite, so we may select a constant $c \in \K$ such that $$-c \notin \bigcup_{j=1}^m (E_j \cap \K).$$ With this choice, $c \cdot \mathrm{id} + M$ is an equivalence at all generic points $\xi_j$. In derived algebraic geometry, an equivalence of perfect complexes corresponds to its associated perfect cone vanishing, and the vanishing locus of a perfect module is a formal Zariski-open immersion, see Theorem \ref{thm:HAG_perfect_properties}-(\ref{item:perfect_vanishing}). Since this locus contains all generic points, it forms a dense Zariski-open derived Artin substack $U \subseteq X$. On $U$, the horizontal block is an equivalence, and thus $\omega_X$ defines an absolute $n$-shifted symplectic structure on $U$.

Furthermore, if $X$ is proper over $\K$, then since $S$ is locally of finite presentation and admits an $n$-shifted symplectic structure (so $\mathbb{L}_S \simeq \mathbb{T}_S[-n]$ forces bounded coherence on both sides, making $\mathbb{T}_S$ a perfect complex), the pullback $\pi^*\mathbb{T}_S$ is a perfect complex globally on $X$. Therefore, the $\K$-algebra of global endomorphisms $$A = \operatorname{Ext}^0_X(\pi^*\mathbb{T}_S, \pi^*\mathbb{T}_S)$$ is a finite-dimensional $\K$-vector space. Consequently, the global endomorphism $M \in A$ satisfies a minimal polynomial $P(t) \in \K[t]$. The roots of $P(t)$ form a strictly finite set $S \subset \K$. By choosing a constant $c \in \K$ such that $-c \notin S$, the element $c \cdot \mathrm{id} + M$ is strictly invertible in the algebra $A$, which means it is automatically a global isomorphism in $\mathrm{Ho}({QCoh}(X))$. In this case, the degeneracy locus is strictly empty ($U = X$), and the absolute $n$-shifted symplectic structure $\omega_X$ extends globally.

We then complete the proof of Theorem \ref{thm: proof_thurston in dsg}, and hence that of Theorem \ref{thm:1}.

\end{proof}

We close the section with the most obvious (basic) example: 
\begin{example}
    If $(X,\omega_X)$ and $(Y,\omega_Y)$ are both $n$-shifted symplectic $\K$-stacks, then $X\times Y$ admits a \bfem{product} $n$-shifted symplectic form $\omega_{X\times Y}=\pi^*_X \omega_X + \pi^*_Y \omega_Y$ where $\pi_X: X\times Y \to X$ and $\pi_Y:X\times Y \to Y$ are the projection maps. Then one can easily check that both $\pi_X$ and $\pi_Y$ are \bfem{trivial} $n$-shifted symplectic fibrations compatible with $\omega_{X\times Y}$. 
\end{example} 


\section{Applications of derived Thurston theorem} \label{sec:applications}
\subsection{Conormal stacks, mapping stacks and moment map quotients}
The following results apply the derived Thurston theorem to construct absolute shifted symplectic structures on the source stacks of the shifted symplectic fibrations given in Example \ref{example:conormal stack}, Example \ref{ex:mapping_stack}, and Example \ref{ex:moment map}.

\begin{proposition}[Conormal Stacks]\label{prop: conormal} 
    Let $f: X \rightarrow S$ be a morphism of derived Artin $\K$-stacks locally of finite presentation, with $N^*[n]f$ and $S$ quasi-compact, such that $S$ admits an $(n-1)$-shifted symplectic structure $\sigma$. Assume additionally that there exists an $(n-1)$-shifted closed 2-form $\Omega \in \mathcal{A}^{2,cl}(N^*[n]f, n-1)$ whose restriction to each fiber is homotopic to its canonical $(n-1)$-shifted symplectic structure. Then there exists an $(n-1)$-shifted symplectic structure on a dense Zariski-open derived substack of the $n$-shifted conormal $\K$-stack $N^*[n] f$ which is compatible with the fibration $\pi: N^*[n] f\longrightarrow S$.
\end{proposition}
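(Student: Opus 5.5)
The plan is to realise the statement as a direct specialisation of Theorem \ref{thm: proof_thurston in dsg} to the morphism $\pi: N^*[n]f \to S$, with shift $n-1$ in place of $n$. Concretely, I would (a) produce the relative $(n-1)$-shifted symplectic structure on $\pi$, (b) check that the hypotheses of Theorem \ref{thm: proof_thurston in dsg} on the source and target stacks hold, and (c) read off the conclusion, together with compatibility in the sense of Definition \ref{def:compatibility}.

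For step (a), I will use Example \ref{example:conormal stack}. By \cite{Calaque2019}, the natural map $N^*[n]f \to T^*[n]S$ induced by $f^*\mathbb{L}_S \to \mathbb{L}_X$ carries an $n$-shifted Lagrangian structure. The projection $\pi_S: T^*[n]S \to S$ is an $n$-shifted Lagrangian fibration. Proposition \ref{prop: way of inducing symp fib structure} then gives a relative form $\omega \in \mathcal{A}^{2,cl}(N^*[n]f/S, n-1)$ that is an $(n-1)$-shifted symplectic fibration structure on the composite $\pi$. Its restriction $\omega_s \sim \iota_s^*\omega$ to each geometric fiber $\pi^{-1}(s)$ is the canonical $(n-1)$-shifted symplectic structure on $N^*[n]f \times_{T^*[n]S} T^*_s[n]S$. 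This structure comes from the Lagrangian intersection theorem, \cite[Thm. 2.9]{PTVV}. With this, the assumption on $\Omega$ in the proposition is exactly the hypothesis $\iota_s^*\Omega \sim \omega_s$ required in Theorem \ref{thm: proof_thurston in dsg}.

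For step (b), I need $N^*[n]f$ and $S$ to be derived Artin stacks that are locally of finite presentation and quasi-compact.
\begin{itemize}
\item Quasi-compactness of both is assumed in the statement.
\item $S$ is locally finitely presented by hypothesis.
\item $N^*[n]f$ is the total space of $\mathbb{L}_{X/S}[n-1]$ over $X$. Since $f$ is locally of finite presentation, $\mathbb{L}_{X/S}$ is almost perfect (Theorem \ref{thm:hag_cotangent_properties}). Hence $N^*[n]f = \mathbb{V}(\mathbb{T}_{X/S}[1-n])$ is locally finitely presented over $X$, and therefore over $\K$.
\end{itemize}
Strictly speaking, the linear stack is only a derived Artin stack when the relevant complex is perfect. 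I would either note that $\mathbb{L}_{X/S}$ is perfect in the cases of interest, or observe that the fiberwise symplecticity from step (a) already forces perfectness, by the argument in the proof of Theorem \ref{thm:2}.

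With the hypotheses verified, step (c) applies Theorem \ref{thm: proof_thurston in dsg} with $\sigma \in \mathsf{Symp}(S,n-1)$. This produces a dense Zariski-open $U \subseteq N^*[n]f$ and $\omega_{N} \sim \Omega + c\,\pi^*\sigma$ in $\mathsf{Symp}(U,n-1)$ for a generic $c \in \K$, with $\iota_s^*\omega_N \sim \omega_s$. This last homotopy is precisely compatibility with $\pi$ in the sense of Definition \ref{def:compatibility}.

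The step I expect to need the most care is (a): identifying, up to homotopy, the fiber restriction of the relative form from Proposition \ref{prop: way of inducing symp fib structure} with the ``canonical'' fiberwise structure named in the statement. Otherwise the hypothesis on $\Omega$ would not match the hypothesis of Theorem \ref{thm: proof_thurston in dsg}. My first attempt is to unwind the proof of Proposition \ref{prop: way of inducing symp fib structure}, which defines the relative form via Safronov's construction \cite[Prop. 1.10]{Safronov2023}. There one checks that base change along $s: \spec\K \to S$ recovers the Lagrangian-intersection form on $N^*[n]f \times_{T^*[n]S} T^*_s[n]S$. If that is taken as the definition of ``canonical'', the identification holds by construction.
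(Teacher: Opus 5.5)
Your proposal is correct and follows essentially the paper's own argument. The $(n-1)$-shifted relative structure comes from composing the Lagrangian $N^*[n]f\to T^*[n]S$ with the Lagrangian fibration $T^*[n]S\to S$. The fiber forms are identified with the Lagrangian-intersection structures on $\pi^{-1}(s)$; the paper does this by recalling Safronov's loop-of-homotopies construction. Theorem \ref{thm: proof_thurston in dsg} with shift $n-1$ is then applied to $\Omega + c\,\pi^*\sigma$. Your extra hypothesis check in (b) does not appear in the paper and is automatic, since a locally finitely presented morphism of derived Artin stacks already has perfect $\mathbb{L}_{X/S}$. This is just as well, because your alternative ``fiberwise symplecticity forces perfectness'' would be circular.
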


\begin{proof}
    The canonical fiber sequence $f^*\mathbb{L}_S \rightarrow \mathbb{L}_X \rightarrow \mathbb{L}_{X/S}$ induces an $n$-shifted Lagrangian structure on the morphism $g:N^*[n] f \longrightarrow T^*[n]S$, where the target is equipped with its canonical $n$-shifted symplectic structure $d_{dR}\lambda$ ($\lambda$ being the natural Liouville 1-form). These fit into the commutative diagram:
    
    \begin{equation}
        \begin{tikzcd}
N^*[n]f \arrow[r, "g"] \arrow[rd, "{\pi}"', dashed] & T^*[n]S \arrow[d, "\pi_S"] \\
       & S                        
\end{tikzcd}
    \end{equation}  
where $\pi_S$ is an $n$-shifted Lagrangian fibration \cite[Theorem 3.4]{Safronov2023}. Consequently, the composition $\pi=\pi_S \circ g$ carries an $(n-1)$-shifted symplectic fibration structure \cite[Theorem 3.4]{Safronov2023}. Thus, there is a relative $(n-1)$-shifted symplectic structure $\omega$ on $\pi$, i.e., $\omega \in \mathcal{A}^{2,cl}(N^*[n]f/S, n-1)$ and its restriction $\omega_s$ to any fiber $\pi^{-1}(s)$ defines an $(n-1)$-shifted symplectic structure. Let us first recall from \cite[Theorem 2.22]{Safronov2019} the construction of $\omega_s$ from $d_{dR}\lambda$:
For each $s\in S$, consider 
\[
\begin{tikzcd}
{
{N^*[n] f \times_{T^*[n]S} (T^*[n]S)_s}} \arrow[rr, "h"] \arrow[d, "j_s", hook] &  & {(T^*[n]S)_s} \arrow[r, "\pi_S|"] \arrow[d, "i_s", hook] & \{s\} \arrow[d, "i", hook] \\
{N^*[n] f} \arrow[rr, "g"] \arrow[rrr, "\pi=\pi_S \circ g", bend right]            &  & {T^*[n]S} \arrow[r, "\pi_S"]                             & S                          
\end{tikzcd}
\]  
where $i_s$ and $j_s$ denote the inclusions of the fibers $$\pi_S^{-1}(s)=(T^*[n]S)_s \quad \text{ and } \quad \pi^{-1}(s)=N^*[n] f \times_{T^*[n]S}(T^*[n]S)_s,$$ respectively. Here $N^*[n] f \times_{T^*[n]S} (T^*[n]S)_s$ is the fiber product of the left square.

As shown in the proof of Theorem 2.22 in \cite{Safronov2019}, the commutativity of the left rectangle and the Lagrangian structures on $g$ and $i_s$ yield a composition of homotopies $$0 \sim j_s^*g^*d_{dR}\lambda \sim h^*i_s^*d_{dR}\lambda \sim 0$$ in $\mathcal{A}^{2,cl}(\pi^{-1}(s), n)$. This loop at the zero form defines a class in $\pi_1(\mathcal{A}^{2,cl}(\pi^{-1}(s), n), 0)$. By the Dold-Kan correspondence, taking the loop space drops the cohomological degree by $1$, so this canonically evaluates to an element $\omega_s \in \pi_0(\mathcal{A}^{2,cl}(\pi^{-1}(s), n-1)) \simeq \mathcal{A}^{2,cl}(\pi^{-1}(s), n-1)$. Moreover, from Theorem 2.9 in \cite{PTVV}, $\omega_s$ is non-degenerate; hence, it defines an $(n-1)$-shifted symplectic structure on the fiber $\pi^{-1}(s)$ for each $s\in S$. Therefore, the collection $\{\omega_s\}_{s\in S}$ defines the required relative $(n-1)$-shifted symplectic structure $\omega$ on $\pi$. 

We now apply the derived Thurston theorem (Theorem \ref{thm: proof_thurston in dsg}). By our assumption, there exists an absolute $(n-1)$-shifted closed 2-form $\Omega \in \mathcal{A}^{2,cl}(N^*[n]f, n-1)$ such that $j_s^*\Omega \sim \omega_s$ in $\mathcal{A}^{2,cl}(\pi^{-1}(s), n-1)$ for each $s \in S$. By Theorem \ref{thm: proof_thurston in dsg}, with a suitable choice of a constant $c \in \K$, the 2-form $\Omega_{N^*[n]f} := \Omega + c \cdot \pi^*\sigma$ satisfies $\Omega_{N^*[n]f} \in \mathsf{Symp}(U, n-1)$ for a dense open substack $U \subseteq N^*[n]f$, and $j_s^*\Omega_{N^*[n]f} \sim \omega_s$ for each $s \in S(\K)$. If $N^*[n]f$ is proper, this extends globally.
\end{proof}

Note that the above result can also be extended verbatim to the map $N_{\beta}^*[n] f \to T_{\beta}^*[n]S$ of twisted $\K$-stacks for each $n$. Namely, via minor modifications of the above proof, one can readily establish:

\begin{corollary}[Twisted Conormal Stacks] \label{cor: conormal}
    Let $f: X \rightarrow S$ be a morphism of derived Artin $\K$-stacks locally of finite presentation, with $N^*_\beta[n]f$ and $S$ quasi-compact, such that $S$ admits an $(n-1)$-shifted symplectic structure $\sigma$. Suppose additionally that there exists an $(n-1)$-shifted closed 2-form $\Omega \in \mathcal{A}^{2,cl}(N^*_\beta[n]f, n-1)$ whose restriction to each fiber is homotopic to the canonical $(n-1)$-shifted symplectic structure. Then there exists an $(n-1)$-shifted symplectic structure on a dense Zariski-open derived substack of the $n$-shifted $\beta$-twisted conormal $\K$-stack $N^*_\beta[n] f$ which is compatible with the fibration $\pi: N^*_\beta[n] f\longrightarrow S$. \qed
\end{corollary}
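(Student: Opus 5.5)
The plan is to repeat the proof of Proposition \ref{prop: conormal} with the untwisted objects replaced by their twisted analogues. The first step is to place the twisted conormal stack into the same Lagrangian-intersection picture. By \cite[Definition 1.22]{Safronov2023}, $N^*_\beta[n]f$ carries a natural morphism $g_\beta: N^*_\beta[n]f \to T^*_\beta[n]S$, obtained from the fiber sequence $f^*\mathbb{L}_S \to \mathbb{L}_X \to \mathbb{L}_{X/S}$ twisted by the pullback of $\beta$. This morphism carries an $n$-shifted Lagrangian structure relative to the $n$-shifted symplectic structure on $T^*_\beta[n]S$ from Example \ref{example:β-twisted cotangent stack}. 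By \cite[Proposition 1.21]{Safronov2023}, the projection $\pi_{S,\beta}: T^*_\beta[n]S \to S$ is an $n$-shifted Lagrangian fibration. Proposition \ref{prop: way of inducing symp fib structure} then shows that $\pi = \pi_{S,\beta}\circ g_\beta$ carries an $(n-1)$-shifted symplectic fibration structure $\omega \in \mathcal{A}^{2,cl}(N^*_\beta[n]f/S, n-1)$.

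The second step identifies the fiberwise forms $\omega_s$, which is what is needed to match the hypothesis on $\Omega$. Fix $s \in S(\K)$ and form the diagram with $j_s: \pi^{-1}(s) \hookrightarrow N^*_\beta[n]f$ and $i_s: (T^*_\beta[n]S)_s \hookrightarrow T^*_\beta[n]S$. The symplectic form on $T^*_\beta[n]S$ is no longer $d_{dR}\lambda$; it is the form coming from the fiber-product description $X\times_{T^*[n+1]X} X$. So the loop $0 \sim j_s^*g_\beta^*\omega_\beta \sim h^*i_s^*\omega_\beta \sim 0$ must be built from the Lagrangian structures on $g_\beta$ and $i_s$, where $\omega_\beta$ denotes the twisted symplectic form. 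This loop gives $\omega_s \in \mathcal{A}^{2,cl}(\pi^{-1}(s), n-1)$ through the Dold--Kan shift, and \cite[Theorem 2.9]{PTVV} gives its non-degeneracy. I expect this to be the main obstacle. One has to check that the twisted Lagrangian structures glue over $S$ into the relative form, which Proposition \ref{prop: way of inducing symp fib structure} guarantees, and that the resulting $\omega_s$ is the ``canonical'' fiberwise structure referred to in the hypothesis. The first thing I would try is to note that over a $\K$-point $s$ the twist $\beta$ restricts along $s$. Then $(T^*_\beta[n]S)_s$ is the shifted cotangent fiber with its Lagrangian structure perturbed only by a homotopy, so the fiberwise forms agree with the untwisted construction up to homotopy.

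The final step applies Theorem \ref{thm: proof_thurston in dsg} directly. Here $N^*_\beta[n]f$ and $S$ are quasi-compact and locally finitely presented, $\sigma \in \mathsf{Symp}(S,n-1)$, and $\Omega$ satisfies $j_s^*\Omega \sim \omega_s$ for all $s$. The theorem then produces $c \in \K$ and a dense Zariski-open $U \subseteq N^*_\beta[n]f$ on which $\Omega + c\cdot\pi^*\sigma \in \mathsf{Symp}(U,n-1)$. This form restricts on fibers to $\omega_s$, because $\iota_s^*\pi^*\sigma \simeq 0$, and so it is compatible with $\pi$. If $N^*_\beta[n]f$ is proper, the same theorem gives $U = N^*_\beta[n]f$.
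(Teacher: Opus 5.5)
Your proposal is correct and follows the paper's route: the paper proves this corollary by rerunning the proof of Proposition \ref{prop: conormal} with $N^*_\beta[n]f \to T^*_\beta[n]S \to S$ in place of $N^*[n]f \to T^*[n]S \to S$ and then invoking Theorem \ref{thm: proof_thurston in dsg}, which is what your three steps do. The ``obstacle'' you flag is not one, because the hypothesis on $\Omega$ is stated relative to the canonical fiberwise structure of the twisted fibration itself; drop the proposed comparison with the untwisted construction, which need not hold in general, since the fibers of the twisted conormal stack are typically twisted shifted cotangent stacks of the $X_s$ rather than $T^*[n-1]X_s$.
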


Now, let us consider the situation in Example \ref{ex:moment map} and apply Theorem \ref{thm: proof_thurston in dsg} for $2$-shifted symplectic fibration structure on $X/G \longrightarrow (BG, \omega_{BG})$, along with an additional condition.
\begin{proposition}[Moment Map Quotients] \label{prop: moment quotient}
    Let $(X, \omega_X)$ be a {$2$-shifted} symplectic derived Artin stack equipped with a smooth action of $G$, and let $\mu: X \rightarrow {\mathfrak{g}^*[2]}$ be a \emph{{$2$-shifted} moment map}. Assume that the quotient stack $X/G$ and $BG$ are quasi-compact, and that there exists a $2$-shifted closed 2-form $\Omega$ on the quotient stack $X/G$ such that its pullback under the map $p: X \rightarrow X/G$ is equivalent to $\omega_X$. Then there exists a {$2$-shifted} symplectic structure on a dense Zariski-open derived substack of $X/G$.
\end{proposition}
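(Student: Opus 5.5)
The plan is to reduce Proposition \ref{prop: moment quotient} to Theorem \ref{thm: proof_thurston in dsg}, applied to the morphism $q: X/G \to (BG,\omega_{BG})$ of Example \ref{ex:moment map}. That example, via Proposition \ref{prop: way of inducing symp fib structure}, already gives $q$ a $2$-shifted symplectic fibration structure $\omega \in \mathcal{A}^{2,cl}((X/G)/BG, 2)$. The target carries the canonical $2$-shifted symplectic structure $\sigma=\omega_{BG}$. Finite presentation holds because $X$ is Artin locally of finite presentation and $G$ is affine algebraic. Quasi-compactness of $X/G$ and $BG$ is assumed. So the only remaining hypothesis of Theorem \ref{thm: proof_thurston in dsg} is the existence of an absolute closed form $\Omega$ with $\iota_s^*\Omega \sim \omega_s$ on every geometric fiber.

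\textbf{Identifying the fibers and their forms.} Since $BG$ has a single geometric point up to equivalence (the atlas $\mathrm{pt}\to BG$), the fiber of $q$ over it is $X/G \times_{BG} \mathrm{pt} \simeq X$, and the fiber inclusion is the quotient map $p: X \to X/G$. I would then check that the relative form restricted along $\mathrm{pt}\to BG$ recovers $\omega_X$ up to homotopy. Concretely, the fiber of $T^*[3]BG=\mathfrak{g}^*[2]/G \to BG$ over the point is $\mathfrak{g}^*[2]$, and the Lagrangian $\mu/G$ pulls back to $\mu: X\to \mathfrak{g}^*[2]$. The fiber form produced by Proposition \ref{prop: way of inducing symp fib structure} is the $2$-shifted form on the Lagrangian intersection $X\times_{\mathfrak{g}^*[2]/G}\mathfrak{g}^*[2] \simeq X$ built from the two isotropic homotopies, exactly as in the proof of Proposition \ref{prop: conormal}. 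The moment map axiom, namely that the Lagrangian structure on $\mu/G$ restricts on $X$ to the isotropic data whose induced loop is $\omega_X$, identifies this fiber form with $\omega_X$. So $\omega_s\sim\omega_X$.

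\textbf{Applying the theorem.} The hypothesis $p^*\Omega\sim\omega_X$ now reads $\iota_s^*\Omega\sim\omega_s$, which is precisely the compatibility assumption of Theorem \ref{thm: proof_thurston in dsg}. For a generic $c\in\K$, that theorem yields a dense Zariski-open derived substack $U\subseteq X/G$ on which $\Omega + c\cdot q^*\omega_{BG}$ is $2$-shifted symplectic. It is moreover compatible with $q$, which proves the claim.

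\textbf{Main obstacle.} The delicate step is the identification of the fiber form with $\omega_X$. It requires unwinding how the relative form of Proposition \ref{prop: way of inducing symp fib structure} (via \cite[Prop. 1.10]{Safronov2023}) restricts along $\mathrm{pt}\to BG$. It also requires matching the result with the convention defining a $2$-shifted moment map, including the identification $\mathfrak{g}^*[2]/G\simeq T^*[3]BG$ of \cite{AnelCalaque}. A sign or a shift by $\mu^*$ of a base term could appear, but any such discrepancy would be a multiple of the pulled-back base form, which vanishes on fibers, so it should not affect compatibility. A secondary point is that all geometric fibers are equivalent to $X$ because $BG$ has a single geometric point, so checking one fiber suffices.
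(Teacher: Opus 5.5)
Your proposal is correct and follows essentially the same route as the paper. Both apply the derived Thurston theorem to $X/G \to (BG,\omega_{BG})$ with the $2$-shifted fibration structure of Example \ref{ex:moment map}, identify the single (up to equivalence) fiber with $X$ carrying $\omega_X$, and use $p^*\Omega\sim\omega_X$ as the compatibility hypothesis to make $\Omega + c\cdot\Pi^*\omega_{BG}$ symplectic on a dense open substack. The fiber-form identification you flag as the main obstacle is not unwound in the paper; it is settled by citing \cite{GrataloupPHD} for the equivalence $X/G\times_{T^*[3]BG}\mathfrak{g}^*[2]\simeq X$ of $2$-shifted symplectic stacks (the intersection should read $X/G\times_{\mathfrak{g}^*[2]/G}\mathfrak{g}^*[2]$, not $X\times_{\mathfrak{g}^*[2]/G}\mathfrak{g}^*[2]$ as you wrote).
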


\begin{proof}
The composition $\Pi: X/G \longrightarrow \mathfrak{g}^*[2]/G \simeq T^*[3]BG\longrightarrow(BG, \omega_{BG})$, with $\omega_{BG} \in \mathsf{Symp}(BG,2),$ carries a $2$-shifted symplectic fibration structure arising from the following pullback diagram:
\[
\begin{tikzcd}
{{X/G \times_{T^*[3]BG} \mathfrak{g}^*[2]}} \arrow[rr] \arrow[d] &  & {\mathfrak{g}^*[2]} \arrow[r] \arrow[d,"\text{3-Lag}"] & * \arrow[d] \\
{X/G} \arrow[rr, "\text{3-Lag}"]      &  & {{T^*[3]BG}} \arrow[r, "\pi_{BG}"]                             & (BG, \omega_{BG}).
\end{tikzcd}
\]Since $\mu$ is a moment map, there is an equivalence of $2$-shifted symplectic stacks \cite{GrataloupPHD}\[ X/G \times_{T^*[3]BG} \mathfrak{g}^*[2] \simeq X,\]which implies that the symplectic structure on each fiber is equivalent to the original structure $\omega_X$ on $X$. It follows from the assumption that the pullback $p^*\Omega$ agrees with the symplectic structure on each fiber, satisfying the conditions of the derived Thurston's theorem. 
The desired symplectic form on a dense open substack of $X/G$ is then constructed using the form $\Omega$ and the pullback of $\omega_{BG}$ under the composition $\Pi$.
\end{proof}

As a further application, the mapping stack construction described in Example \ref{ex:mapping_stack} can be naturally upgraded to an application of the derived Thurston theorem (Theorem \ref{thm: proof_thurston in dsg}).

\begin{proposition}[Mapping Stacks] \label{prop: mapping_stack_thurston}
    Let $Y$ be an $n$-shifted symplectic derived Artin $\K$-stack and $X$ be a smooth projective Calabi-Yau $m$-fold. Suppose $f: L \rightarrow Y$ is an $n$-shifted Lagrangian morphism and $p: Y \rightarrow S$ is an $n$-shifted Lagrangian fibration such that the base $S$ admits an $(n-1)$-shifted symplectic structure $\sigma_S$. 
    
    
    Assume additionally that the mapping stacks $\mathsf{Map}(X,L)$ and $\mathsf{Map}(X,S)$ are quasi-compact (e.g., by restricting to appropriate open substacks or connected components), and that the induced $(n-1)$-shifted symplectic fibration $p \circ f: L \rightarrow S$ (from Proposition \ref{prop: way of inducing symp fib structure}) admits a compatible absolute $(n-1)$-shifted closed 2-form $\Omega_L \in \mathcal{A}^{2,cl}(L, n-1)$.
    
    Then there exists an $(n-m-1)$-shifted symplectic structure on a dense Zariski-open derived substack of the mapping stack $\mathsf{Map}(X,L)$ which is compatible with the induced shifted symplectic fibration $\Pi: \mathsf{Map}(X,L) \longrightarrow \mathsf{Map}(X,S)$.
\end{proposition}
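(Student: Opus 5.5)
The plan is to reduce the statement to Theorem \ref{thm: proof_thurston in dsg} applied to $\Pi: \mathsf{Map}(X,L) \to \mathsf{Map}(X,S)$. That theorem needs four inputs: (a) an $(n-m-1)$-shifted symplectic fibration structure on $\Pi$; (b) an $(n-m-1)$-shifted symplectic structure on the base $\mathsf{Map}(X,S)$; (c) a global closed 2-form on $\mathsf{Map}(X,L)$ whose fiber restrictions are homotopic to the fiber symplectic structures; and (d) quasi-compactness and local finite presentation. Input (d) is assumed, and local finite presentation follows because $X$ is smooth projective and $L,S$ are locally finitely presented.

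Inputs (a) and (b) come from the AKSZ/PTVV transgression along the Calabi--Yau orientation $[X]\colon \mathcal{O}(X) \to \K[-m]$.
\begin{itemize}
\item For (a), Example \ref{ex:mapping_stack} (via \cite{Calaque2015} and Proposition \ref{prop: way of inducing symp fib structure}) already provides the fibration structure on $\Pi$, namely a relative form $\omega_\Pi \in \mathcal{A}^{2,cl}(\mathsf{Map}(X,L)/\mathsf{Map}(X,S), n-m-1)$.
\item For (b), \cite[Theorem 2.5]{PTVV} applied to $(S,\sigma_S)$ gives $\sigma := \int_{[X]} \mathrm{ev}_S^*\sigma_S \in \mathsf{Symp}(\mathsf{Map}(X,S), n-m-1)$, where $\mathrm{ev}_S: X \times \mathsf{Map}(X,S) \to S$ is evaluation.
\end{itemize}

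For (c), set $\Omega := \int_{[X]} \mathrm{ev}_L^*\Omega_L \in \mathcal{A}^{2,cl}(\mathsf{Map}(X,L), n-m-1)$. Integration along $[X]$ is a morphism of graded mixed complexes $DR(X \times M) \to DR(M)[-m]$, so it preserves closedness and lowers the degree by $m$.

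The remaining check is fiberwise compatibility. Fix a $\K$-point $\phi \in \mathsf{Map}(X,S)$ and let $\iota_\phi$ be the fiber inclusion. The fiber $\mathsf{Map}(X,L)_\phi$ is the stack of sections of the pulled-back family $L \times_S X \to X$, and evaluation sends $X \times \mathsf{Map}(X,L)_\phi$ into $L$ over $\phi$. I would prove
\[
\iota_\phi^*\Omega \sim \omega_{\Pi,\phi}
\]
in three steps.
\begin{enumerate}
\item Use that transgression is natural under base change: restricting to the fiber commutes with $\int_{[X]}$.
\item Apply the hypothesis on $\Omega_L$ fiberwise, i.e. its restriction to each fiber $L_s$ is homotopic to the fiber form $\omega_s$ of $p \circ f$.
\item Observe that the relative form $\omega_\Pi$ of Example \ref{ex:mapping_stack} is itself the relative transgression of the relative form of $p\circ f$, so that over each point it is the transgression of the family $\{\omega_s\}$ along $\phi$.
\end{enumerate}
Theorem \ref{thm: proof_thurston in dsg} then produces $c \in \K$ and a dense Zariski-open $U \subseteq \mathsf{Map}(X,L)$ on which $\Omega + c\,\Pi^*\sigma$ is symplectic and compatible with $\Pi$.

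I expect the fiberwise compatibility to be the main obstacle, specifically step 3 together with the homotopy bookkeeping in steps 1 and 2. The hypothesis on $\Omega_L$ is pointwise over $s \in S$, whereas a fiber of $\Pi$ involves the whole image $\phi(X)$. The homotopies $\iota_s^*\Omega_L \sim \omega_s$ therefore have to be assembled into a single relative homotopy $\Omega_L/S \sim \omega_{p\circ f}$ in $\mathcal{A}^{2,cl}(L/S, n-1)$ before transgressing. My first approach is to read "compatible" as exactly such a relative homotopy. I would then use that relative integration $\int_{[X]}$ is a functor on the relative de Rham complexes, which yields $\Omega/\mathsf{Map}(X,S) \sim \omega_\Pi$, and restrict to points afterwards. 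If only the pointwise data is available, I would fall back on $\pi_0$-level checking: nondegeneracy of the relative part is a pointwise condition (as in Step 2 of Theorem \ref{thm: proof_thurston in dsg}), so a pointwise homotopy of leading terms already suffices for the block-diagonalization argument.
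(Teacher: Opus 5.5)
Your proposal is correct and follows the paper's proof. Both take the fibration structure on $\Pi$ from Example \ref{ex:mapping_stack} and Proposition \ref{prop: way of inducing symp fib structure}, put the AKSZ form on $\mathsf{Map}(X,S)$, use the transgressed form $\Omega=\int_{[X]}\mathrm{ev}^*\Omega_L$, and then apply Theorem \ref{thm: proof_thurston in dsg}. The paper only asserts fiberwise compatibility of $\Omega$ via ``functoriality of the AKSZ integration,'' so your extra care there is an improvement, and your fallback already closes that step. Fiberwise nondegeneracy of the relative part of $\Omega_L$ globalizes to an equivalence $\mathbb{T}_{L/S}\to\mathbb{L}_{L/S}[n-1]$ on $L$. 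Via Serre duality on $X$, this transgresses to nondegeneracy of $\Omega$ on every fiber of $\Pi$, which is exactly what Step 2 of the theorem and Definition \ref{def:compatibility} require.
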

\begin{proof}
        As discussed in Example \ref{ex:mapping_stack}, lifting the Lagrangian and Lagrangian fibration structures to mapping stacks yields the diagram
    \begin{equation*}
        \begin{tikzcd}
        \mathsf{Map}(X,L) \arrow[r, "f \circ -"] \arrow[rd, "\Pi"', dashed] & \mathsf{Map}(X,Y) \arrow[d, "p \circ -"] \\
           & \mathsf{Map}(X,S).
        \end{tikzcd}
    \end{equation*}
    
    The extended functoriality of the AKSZ construction into the symmetric monoidal $(\infty, n)$-category $\mathsf{Lag}_n$ of symplectic stacks and iterated Lagrangian correspondences \cite{CHS} ensures that Lagrangian morphisms and Lagrangian fibrations reliably map to $(n-m)$-shifted Lagrangian morphisms and fibrations, respectively. Thus, the vertical morphism $p \circ -$ is an $(n-m)$-shifted Lagrangian fibration and the horizontal morphism $f \circ -$ carries an $(n-m)$-shifted Lagrangian structure. By Proposition \ref{prop: way of inducing symp fib structure}, the composition $\Pi = (p \circ -) \circ (f \circ -)$ carries an $(n-m-1)$-shifted symplectic fibration structure.
    
    Note that because we assume that the induced $(n-1)$-shifted symplectic fibration $p \circ f: L \rightarrow S$ admits a compatible absolute $(n-1)$-shifted closed 2-form $\Omega_L \in \mathcal{A}^{2,cl}(L, n-1)$,  the functoriality of the AKSZ integration allows one to simply set $$\Omega := \int_{[X]} \mathrm{ev}^* \Omega_L,$$ which defines a global compatible form $\Omega$ on $\mathsf{Map}(X,L)$. In fact, the $(n-m-1)$-shifted closed 2-form $\Omega \in \mathcal{A}^{2,cl}(\mathsf{Map}(X,L), n-m-1)$ is such that its restriction to each geometric fiber of the composition $\Pi: \mathsf{Map}(X,L) \longrightarrow \mathsf{Map}(X,S)$ is homotopic to its canonical $(n-m-1)$-shifted symplectic structure. 
    
    On the other hand, the base stack $S$ is equipped with the absolute $(n-1)$-shifted symplectic structure $\sigma_S$. By the AKSZ construction \cite{PTVV}, the mapping stack $\mathsf{Map}(X, S)$ out of the Calabi-Yau $m$-fold $X$ (which carries an $\mathcal{O}$-orientation of dimension $m$) naturally inherits an $(n-1)-m = (n-m-1)$-shifted symplectic structure $\sigma_{\mathsf{Map}}$. 
    
    In total, we end up with the situation where the map$$\Pi: \mathsf{Map}(X,L) \longrightarrow (\mathsf{Map}(X,S), \sigma_{\mathsf{Map}})$$ is an $(n-m-1)$-shifted symplectic fibration over an $(n-m-1)$-shifted symplectic base, along with a global absolute $(n-m-1)$-shifted closed 2-form $\Omega$ on $\mathsf{Map}(X,L)$ satisfying the fiberwise compatibility condition. This perfectly matches the setup of the Derived Thurston Theorem (Theorem \ref{thm: proof_thurston in dsg}), provided we restrict to quasi-compact open substacks. 
    Theorem \ref{thm: proof_thurston in dsg} then applies, and we conclude that for a generic scalar $c \in \K$, the 2-form $$\Omega_{\mathsf{Map}} := \Omega + c \cdot \Pi^*\sigma_{\mathsf{Map}}$$ defines an absolute $(n-m-1)$-shifted symplectic structure on a dense Zariski-open derived substack of $\mathsf{Map}(X, L)$.
\end{proof}

\subsection{Affine model construction}\label{sec:affine symplectic fibration}
In this section, we present an affine local model for an $n$-shifted symplectic fibration structure on the map $\spec\beta: \spec A \longrightarrow (\spec B, \sigma)$ of derived affine schemes, induced from a submersion $\beta:B\rightarrow A$ of standard form cdgas. Furthermore, by choosing a natural closed 2-form on $\spec A$ that satisfies the hypothesis of Theorem \ref{thm: proof_thurston in dsg}, we explicitly describe a shifted symplectic structure on the source $\spec A$.

\paragraph{Step-1: Choosing $(\spec B, \sigma)$ in Darboux form.} Assume that $n<0$ is odd; say, $n=-2\ell-1$ for $\ell \in \N$. 
Suppose $(B, \sigma)$ is in $n$-shifted symplectic Darboux form. Briefly, we assume that $B$ is a (minimal) standard form cdga given as a free algebra over a smooth $\K$-algebra $B(0)$ with coordinates $x_j^0$, generated by the variables $x^{-i}_j, y^{n+i}_j \in B$, where
    \begin{align} 
    & x_1^{-i}, \dots, x_{m_i}^{-i}& &\text{ in degree } -i \ \ \ \text{ for } i= 1,  \dots, \ell, \label{var set1} \\
    & y_1^{n+i}, \dots, y_{m_i}^{n+i}& & \text{ in degree } n+i \ \text{ for } i=0,\dots, \ell, \label{var set2}
    \end{align} where $m_1,\dots, m_{\ell}$ are non-negative integers. The \emph{internal differential} $d_B$ on $B$ is determined by the equations: 
\begin{align} \label{defn_internal d}
    d_B|_{B(0)}=0; \quad d_Bx_j^{-i} =  \frac{\partial H}{\partial y_j^{n+i}} \ \text{ and }  \ d_By_j^{n+i} = \pm \frac{\partial H}{\partial x_j^{-i}} \text{ for all } i,j,
\end{align}    where $H\in B^{n+1}$ is the \emph{Hamiltonian}. 
Furthermore, $\Omega^1_{B}$ is the free $B$-module of finite rank with basis $ \{d_{dR}x_j^{-i}, d_{dR}y_j^{n+i} : \forall i,j \text{ including } i=0\},$ 
and $\sigma:=(\sigma^0, 0,  \dots)$, with
     $  \sigma^0= \sum_{i,j}d_{dR}x_j^{-i} d_{dR}y_j^{n+i},$ is an $n$-shifted symplectic form on $\spec B$ in Darboux form. 

\paragraph{Step-2: Setting the source $\spec A$.} Let $A^0:=A(0)$ be a smooth algebra of dimension $m_0+n_0$, and let $\beta^0:B^0\rightarrow A^0$ be a smooth morphism. Assume there exist elements $u^0_1, \dots, u^0_{n_0}$ in $A^0$ such that $\{d_{dR}\tx_{1}^0, \dots, d_{dR}\tx^0_{m_0}, d_{dR}u^0_1, \dots, d_{dR}u^0_{n_0} \}$ is a basis over $A^0$ for $\Omega_{A^0}^1$, with $\tx^0_j=\beta^0(x^0_j)$. 

Letting $s=\ell$, we define the \emph{commutative graded algebra} $A$ to be the free graded algebra over $A(0)$ generated by the variables:  
\begin{align} 
& \tx_1^{-i}, \dots, \tx_{m_i}^{-i} & & \text{ in degree } (-i) \text{ for } i= 1, \dots, \ell, \nonumber \\
& \ty_1^{n+i}, \dots, \ty_{m_i}^{n+i} & & \text{ in degree } (n+i) \text{ for } i= 0, \dots, \ell, \nonumber \\
& u_1^{-i}, \dots, u_{n_i}^{-i} & & \text{ in degree } (-i) \text{ for } i=1, \dots, s, \nonumber \\
& v_1^{n+i}, \dots, v_{n_i}^{n+i} & & \text{ in degree } (n+i) \text{ for } i=0,\dots, s.
\end{align}

Next, choose an element $G \in A^{n+1}$ (the \emph{superpotential}) depending only on the variables $u$ and $v$, satisfying the classical master equation. We define the \emph{differential $d_A$ on $A$} via:
\begin{align*}
    &d_A|_{A^0}=0, \quad d_A\tx_j^{-i}= \beta(d_B x_j^{-i}), \quad d_A\ty_j^{n+i}= \beta(d_B y_j^{n+i}), \\
    & d_Au_j^{-i}= \partial G/ \partial v_j^{n+i}, \quad d_Av_j^{n+i}= \pm \partial G/ \partial u_j^{-i}.
\end{align*} 
Here we extended $\beta^0$ to a graded algebra morphism $\beta:B\rightarrow A$ by setting $\beta(x_j^{-i}) = \tx_j^{-i}$ and $\beta(y_j^{n+i}) = \ty_j^{n+i}$.

\paragraph{Step-3: Setting the map from $\spec A$ to $\spec B$.} 
The commutation relation $d_A \circ \beta = \beta \circ d_B$ holds consistently on all generators by design, confirming that $\beta:B\rightarrow A$ is a valid morphism of \emph{cdgas}. Furthermore, $\beta$ acts as a submersion, which yields the desired morphism $$\spec\beta: \spec A \longrightarrow (\spec B, \sigma)$$ of derived affine schemes. 
 
\paragraph{Step-4: Defining a relative $n$-shifted symplectic structure on $\spec \beta.$} Note that in the relative setting, $\dR \tx_j^{-i}=0$ and $\dR \ty_j^{n+i}=0$ in $\Omega^1_{A/B}$ because they belong to the image of $\beta.$ Let $\gamma $ be a relative $n$-shifted 2-form on $\spec \beta$ defined by \[ \gamma=\sum_{i,j} \dR u_j^{-i} \dR v_j^{n+i} \in \displaystyle \bigwedge\nolimits^2 \Omega^1_{A/B}[n]. \]  
Clearly, $\dR \gamma=0$. Furthermore, we compute:
\begin{align*}
d_A\gamma &= \sum_{i,j} \left((d_A\circ\dR u_j^{-i}) \dR v_j^{n+i} \pm (d_A\circ \dR v_j^{n+i}) \dR u_j^{-i}\right) \\
&= \dR (\dR G) = 0. 
\end{align*}

To verify non-degeneracy, it suffices to observe that at each point $p\in \spec H^0(A)$, the induced map $$\gamma^{\flat} \otimes id_{H^0(A)} : \mathbb{T}_{A/B}\otimes_{A}H^0(A) \rightarrow \Omega^1_{A/B}[n]\otimes_{A}H^0(A)$$ sends $\langle \partial/\partial v_{j}^{n+i}\rangle_{H^0(A)} \longmapsto \langle \dR u_{j}^{-i}\rangle_{H^0(A)}$ and $\langle \partial/\partial u_{j}^{-i}\rangle_{H^0(A)} \longmapsto \langle \dR v_{j}^{n+i}\rangle_{H^0(A)}$ isomorphically; consequently, $\gamma^{\flat}$ is non-degenerate, and we conclude that $\gamma$ defines a relative $n$-shifted symplectic structure on $\spec \beta.$

\paragraph{Step-5: Choosing a suitable $\Omega \in \mathcal{A}^{2,cl}(\spec A, n).$} Recall the null-homotopic sequence of complexes:
\[DR(B) \xrightarrow[]{(\spec \beta)^*} DR(A) \xrightarrow[]{\cdot/B} DR(A/B).\] We wish to define an $n$-shifted closed 2-form $\Omega:=(\Omega, 0, 0, \dots)$ on $\spec A$ whose image under the map $\cdot /B$ is homotopic to $\gamma.$ Using the variables above, we set:
\[ \Omega=\sum_{i,j} \dR u_j^{-i} \dR v_j^{n+i} + \sum_{i,j} \dR \tx_j^{-i} \dR \ty_j^{n+i}. \]
Notice that $\Omega-\gamma=\sum_{i,j} \dR \tx_j^{-i} \dR \ty_j^{n+i}$, and we compute:
\begin{align*}
    \Omega-\gamma &= \beta_* \bigg(\sum_{i,j} \dR x^{-i}_j \dR y^{n+i}_j\bigg) = (\spec \beta)^* (\sigma).
\end{align*}
Because $\mathbb{T}_A$ splits as $(\spec \beta)^* (\mathbb{T}_B) \oplus\mathbb{T}_{A/B}$ over $\spec A$ (and similarly for $\Omega^1_A$), it follows by construction that $d_A \Omega=0$. Thus, $\Omega$ provides the desired closed form.

\paragraph{Step-6: Constructing an $n$-shifted symplectic structure on $\spec A.$} Using the element $\Omega$ introduced above, we simply set $\Theta:= \Omega$ and define \[ \omega_A:= \Theta + c (\spec \beta)^* (\sigma) \quad \text{ for a scalar } c\in \K.\] 

To verify that $\omega_A$ is non-degenerate, observe that by construction there are no cross-terms in $\Omega$ between the relative vertical variables (the $u$ and $v$ generators) and the horizontal variables (the $\tx$ and $\ty$ generators). Consequently, the induced morphism $\omega_A^\flat: \mathbb{T}_A \rightarrow \mathbb{L}_A[n]$ splits explicitly as a block-diagonal matrix over the decomposition $$\mathbb{T}_A \simeq (\spec \beta)^* (\mathbb{T}_B) \oplus \mathbb{T}_{A/B}.$$ Specifically, its block-diagonal components are exactly $(1+c)(\spec \beta)^*(\sigma)^\flat$ and $\gamma^\flat$. Since $\gamma$ is a relative $n$-shifted symplectic structure (making $\gamma^\flat$ an equivalence vertically) and $\sigma$ is an $n$-shifted symplectic structure on the base (making $\sigma^\flat$ an equivalence horizontally), their induced block morphisms are independently global equivalences. Therefore, the combined block matrix is globally invertible on $\spec A$ for any constant $c \in \K$ such that $1+c \neq 0$. By choosing any $c \neq -1$, we guarantee that $\omega_A^\flat$ is an equivalence. This confirms that $\omega_A$ is non-degenerate, thereby establishing it as an absolute $n$-shifted symplectic structure on $\spec A$.\qed
\section*{Concluding Remarks and Future Directions}

This paper extends the concept of symplectic fibration to the setting of derived symplectic geometry with the help of relative differential calculus. In this context, the $n$-shifted relative symplectic structures serve as an appropriate substitute for classical symplectic fibrations. 

Regarding such structures, we first provide Proposition \ref{prop: way of inducing symp fib structure}, which introduces a way of inducing symplectic fibration structures using the composition of Lagrangians and Lagrangian fibrations. As applications of that gadget, we present several examples --mostly gathered from literature--, consisting of conormal stacks, mapping stacks, and moment map quotients, see Example \ref{example:conormal stack}, Example \ref{ex:mapping_stack}, and Example \ref{ex:moment map}.

We established two key theorems in this paper: Theorem \ref{thm:2} and Theorem \ref{thm: proof_thurston in dsg}. The former serves as the derived analogue of the \textit{induced compatible symplectic fibration lemma}, also referred to in the text as \cite[Lemma 6.2] {Mcduff}. The latter, called the \textit{derived Thurston theorem}, provides a method for constructing a compatible (absolute) shifted symplectic structure on the source $X$ of a symplectic fibration \( \pi: X \rightarrow (S,\omega_S) \),  with a shifted symplectic target. 

As an application, we combine, under certain conditions, Theorem \ref{thm: proof_thurston in dsg} with the aforementioned examples of symplectic fibrations to produce relative-to-absolute-type constructions (cf. Corollary \ref{cor: app to Thm1}). Last but not least, Section \ref{sec:affine symplectic fibration} examines affine shifted symplectic fibrations and constructs a natural local model in accordance with  Theorem \ref{thm: proof_thurston in dsg}.

 Inspired by a prior work of Arıkan \cite{mfa}, our relative-to-absolute-construction formalism can also be naturally applied to the case of \textit{contact symplectic fibrations}, in which one considers symplectic fibrations with a contact target. 
 It has been shown in \cite{mfa} that Thurston's theorem can be adapted to the framework of such fibrations, allowing the construction of a \textit{compatible} contact structure on the source space. In this regard, we aim to extend the main results of this paper to the context of \textit{derived contact geometry} \cite{kib1} in subsequent work and investigate intriguing consequences with applications.

\end{document}